\documentclass[journal]{IEEEtran}
\usepackage{CJK}
\usepackage{amssymb}
\usepackage{epstopdf}
\usepackage{graphicx}
\usepackage{multirow}
\usepackage{diagbox}
\usepackage{algorithmicx,algorithm}
\usepackage{framed}
\usepackage{booktabs}
\usepackage{threeparttable}
\usepackage{comment}
\usepackage{amsmath}
\usepackage{amsthm}
\usepackage{xcolor}

\graphicspath{ {./images/} }

\newtheorem{thm}{Theorem}
\newtheorem{lem}{Lemma}
\newtheorem{cor}{Corollary}
\newtheorem{pro}{Proposition}

\newtheorem{rmk}{Remark}

\newtheorem{ass}{Assumption}

\newcommand {\emptycomment}[1]{}

\newcommand{\be }{\begin{equation}}
\newcommand{\ee }{\end{equation}}

\newcommand{\noi}{\noindent}

\newcommand{\ttt}{\theta}
\newcommand{\wh}{\widetilde}
\newcommand{\p}{\partial}
\newcommand{\f}{\frac}

\newcommand{\aaa}{\alpha}

\newcommand{\nn}{\langle}
\newcommand{\mm}{\rangle}
\newcommand{\nono}{\nonumber}
\newcommand{\huaX}{\mathcal{X}}

\newcommand{\mbb}{\mathbb}

\newcommand{\kkk}{\kappa}

\def\bea{\begin{eqnarray}}
\def\eea{\end{eqnarray}}
\def\be{\begin{equation}}
\def\ee{\end{equation}}
\def\blm{\begin{lem}}
\def\elm{\end{lem}}
\def\btm{\begin{theorem}}
\def\etm{\end{theorem}}

\def\p{\mathcal{P}}

\def\ff{\Phi}

%{{\mathcal{E}}}%{\mathcal{B}}

%\newcommand{\A}{\mathcal{A}}
%{{\mathcal{F}}}%{\mathcal{A}}

\newcommand{\huaF}{\mathcal{F}}

\newcommand{\huaP}{\mathcal{P}}
%{\mathcal{C}}

\newcommand{\huaI}{\mathcal{I}}

\newcommand{\huaO}{\mathcal{O}}
\newcommand{\huaT}{\mathcal{T}}

\def\bea{\begin{eqnarray}}
\def\eea{\end{eqnarray}}
\def\be{\begin{equation}}
\def\ee{\end{equation}}
\def\blm{\begin{lem}}
\def\elm{\end{lem}}

\def\bea{\begin{eqnarray}}
	\def\eea{\end{eqnarray}}
\def\be{\begin{equation}}
	\def\ee{\end{equation}}
\def\blm{\begin{lem}}
	\def\elm{\end{lem}}
\def\bes{\begin{eqnarray*}}
	\def\ees{\end{eqnarray*}}
\def\beal{\begin{aligned}}
	\def\eeal{\end{aligned}}
\def\ppp{\Phi}
\def\wh{\widehat}
\def\ff{\Phi}

\def\nb{\nabla}

\def\ww{\widetilde}
\def\sss{\sigma}

\def\nnn{\nabla}
\newtheorem{theorem}{Theorem}

\hyphenation{op-tical net-works semi-conduc-tor}

%\setlength{\abovedisplayskip}{0.2pt}

%\setlength{\belowdisplayskip}{0.2pt}

%\setlength{\abovecaptionskip}{-0.15cm}   %调整图片标题与图距离
%\setlength{\belowcaptionskip}{-1cm}

%\DeclareMathSizes{10}{7}{6}{5.5}

%\DeclareMathSizes{10}{7.7}{6.2}{5.3}

%\DeclareMathSizes{10}{7.3}{5.7}{5.3}

%\DeclareMathSizes{10}{8}{7}{5.5}

%\DeclareMathSizes{10}{7}{6}{5.5}

\begin{document}
\begin{CJK*}{GBK}{song}

\title{Distributed Stochastic Optimization under Heavy-Tailed Noise: A Federated Mirror Descent Approach with High Probability Convergence
	%\thanks{Zhan Yu is in part supported by the General Research Fund of the Research Grants Council of Hong Kong, under Grant HKBU 12301424 and the National Natural Science Foundation of China, under Grant No. 12401123. }
}
\author{Zhan Yu, Lan Liao, Deming Yuan, \IEEEmembership{Senior Member, IEEE}, Daniel W. C. Ho, \IEEEmembership{Life Fellow, IEEE}, Ding-Xuan Zhou\thanks{Zhan Yu is with the Department of Mathematics, Hong Kong Baptist University.  Lan Liao and Deming Yuan are with the School of Automation, Nanjing University of Science and Technology. Daniel W. C. Ho is with the Department of Mathematics, City University of Hong Kong. Ding-Xuan Zhou is with the School of Mathematics and Statistics, The University of Sydney.
}}
%\IEEEmembership{Senior Member, IEEE}
%Zhan Yu, Lan Liao, Deming Yuan, Daniel W. C. Ho, Ding-Xuan Zhou
\maketitle

%\small

\begin{abstract}
We study the distributed stochastic optimization (DSO) problem  under a heavy-tailed noise condition by utilizing a multi-agent system. Despite the extensive research on DSO algorithms used to solve DSO problems under light-tailed noise conditions (such as Gaussian noise), there is a significant lack of study of DSO algorithms in the context of heavy-tailed random noise. 
%However, heavy-tailed noise models play a crucial role in many statistical learning scenarios.
Classical DSO approaches in a heavy-tailed setting may present poor convergence behaviors. Therefore, developing DSO methods in the context of heavy-tailed noises is of  importance. This work follows this path and we consider the setting that the gradient noises associated with each agent can be heavy-tailed, potentially having unbounded variance.  We propose a clipped  federated stochastic mirror descent algorithm to solve the DSO problem. We rigorously present a convergence theory and show that, under appropriate rules on the stepsize and the clipping parameter associated with the local noisy gradient influenced by the heavy-tailed noise, the algorithm is able to achieve satisfactory high probability convergence.

\end{abstract}
\begin{IEEEkeywords}
Distributed optimization, federated learning, heavy-tailed noise, stochastic optimization, stochastic gradient, mirror descent
\end{IEEEkeywords}
%\tableofcontents

\section{Introduction}
In the past decade, distributed optimization has achieved a remarkable success across various fields, including systems science, data science, and machine learning \cite{s1}-\cite{sc2024}.   This success is largely due to the efficiency of distributed optimization in managing large-scale data, information, and decision variables within application domains such as the Internet of Things \cite{yyw2019},       data-driven machine learning \cite{kbnrs2020}, smart grid \cite{cns2014}, power system \cite{yhl2016}. A core objective of distributed optimization is to enable local agents or agents to collaborate through suitable cooperation patterns and algorithmic mechanisms to minimize a global objective function in an appropriate decision space. This global function can often be decomposed into the sum of local objective functions associated with the individual agents. To address this problem, many effective distributed algorithms have been developed over the past decade. Various fundamental optimization algorithmic frameworks, such as (sub-)gradient descent \cite{s1}, \cite{rnv2010}, mirror descent \cite{yhhx2020}, (sub-)gradient push \cite{no2015}, primal-dual methods \cite{cns2014}, dual-averaging \cite{daw2012}, and gradient tracking \cite{pn2021}, have been employed to construct efficient distributed optimization and learning methods.

In the existing literature on DSO algorithms, distributed stochastic gradient (DSG) methods play a crucial role. We observe that, a majority of existing works on stochastic gradient based methods in DSO are based on the assumption of light-tailed stochastic gradient noise. This can be traced back to the earlier foundational work \cite{rnv2010} and encompasses the main-stream developments of DSO in recent years, including e.g., \cite{daw2012}, \cite{yhhx2020}, \cite{llww2018}, \cite{xzhyx2022}, \cite{lwzw2023}. An obvious feature of the light-tailed noises is that the corresponding distribution satisfies the bounded variance assumption. 
 However, the random environments in real-world machine learning and systems science applications are often more complex. Accordingly,  examples from statistical learning and deep learning often exhibit heavy-tailed random noises, potentially possessing unbounded variance (see e.g., \cite{ssg2019}-\cite{zhang2020}). The emergence of heavy-tailed randomness causes classical DSO methods to encounter bottlenecks in their corresponding handling. In this context, efforts to address DSO under heavy-tailed noise has only recently begun to emerge \cite{qlxc2025}-\cite{sc2024}.
  The existing theoretical foundations based on light-tailed noise models such as the sub-Gaussian noise are quite idealized and no longer sufficient to support the rapidly evolving practical demands of machine learning development under heavy-tailed noise.  Therefore, it is of significance to further develop effective DSO algorithms with stochastic gradients induced by heavy-tailed noise. This work will proceed along this path. 
  
  In this work, we aim to develop DSG methods for solving DSO problem when the stochastic gradient is influenced by heavy-tailed noise. Our proposed method is related to  federated optimization/learning, which has recently emerged as a very popular framework in machine learning for addressing finite-sum minimization in DSO  \cite{lszsts2020}-\cite{lsts2020}. Our  model  involves a group of local agents (clients) and a coordinate server (master). In our algorithm, the local agents collaboratively train a shared model. Instead of communicating directly with one another, the local agents periodically send their local information to the coordinate server, the coordinate server is able to aggregate these local updates in some appropriate manners and sends the averaged information back to local agents.  Such an update procedure gives this algorithm a strong privacy protection mechanism.  The idea of federated optimization/learning has also been witnessed in the well-developed divide-and-conquer based distributed learning algorithms for handling large-scale data in statistical learning  \cite{yfsz2024}, \cite{llwzz2025}.
  
   For handling the stochastic gradient influenced by the heavy-tailed random noise, we utilize a  stochastic gradient clipping strategy (\cite{qlxc2025}-\cite{sc2024}, \cite{nnen2023}, \cite{gsdhgdgr2024}) to mitigate the impact of the heavy-tailed random noise. The idea of related truncating techniques have been considered for addressing heavy-tailed noise or noise under weak moment conditions in statistical learning theory  \cite{fw2020}, \cite{hfw2022}, and it has only received attention in the field of DSO in the last two years (\cite{qlxc2025}-\cite{sc2024}) and has potential vitality in DSO. For the algorithm structure in each agent, we employ the mirror descent one induced by Bregman divergence to  perform the updates required to complete each iteration of our algorithm. Mirror descent, introduced by Nemirovski and Yudin in the 1980s, emerged as a powerful optimization technique in convex analysis due to its flexibility with geometric structures. Compared with other methods, mirror descent can better adapt to various geometric structures of the underlying space by choosing appropriate mirror maps. This flexibility allows it to perform well in diverse fields, including machine learning, statistics, and signal processing. Based on this fact, the theory presented in this paper naturally encompasses many important federated optimization algorithmic theories for DSO under heavy-tailed randomness, including federated stochastic gradient descent and federated stochastic entropic descent. To our knowledge, despite significant progress in federated stochastic optimization algorithms in the past five years, research on federated stochastic mirror descent under heavy-tailed gradient noise has not yet been effectively pursued. This paper will provide new insights into this branch of research.

We summarize the contributions of this work as follows:

\begin{itemize}
	\item A clipped federated stochastic mirror descent algorithm (Clipped FedSMD) is proposed for solving the DSO problem under heavy-tailed random noise which may have unbounded variance.  The high probability convergence is rigorously established for the Clipped FedSMD. We  rigorously develop a set of inductive proof techniques for the high probability convergence of the federated optimization approach to DSO under heavy-tailed randomness. Explicit selection rules for stepsizes and clipping parameters are provided. Due to the influence of the consensus bound, the selections are essentially different from the non-distributed methods for handling heavy-tailed noises. Moreover, this selection does not require a known time-horizon that the existing works often required. We demonstrate  that the algorithm can achieve a high probability convergence rate of 
	 $\huaO\left(T^{\f{1-p}{2p}}\log^\gamma T\right)$ for solving the DSO problem where $1<p\leq2$ is the tail parameter used to measure the heavy-tailed randomness and $\gamma>1$ is a freely chosen stepsize parameter.

	\item 
%Some new analysis approaches for federated optimization/learning under heavy-tailed random noises are well developed.
Due to the introduction of a mirror map and its associated Bregman divergence, the Clipped FedSMD has the potential to achieve efficient updating advantages when clear geometric feature of the decision space is present in the DSO problem, provided that an appropriate selection of the mirror map is made. By introducing an appropriate monotonic sequence relying on the Bregman divergence in the proof, our analysis techniques do not require the compactness condition on the decision space that is commonly used in the majority of
 existing works on distributed mirror descent (DMD). The theory is applicable to constrained DSO in both bounded and unbounded decision domains, as well as to unconstrained DSO.

\end{itemize}

\noi \textbf{Notation:} Denote the standard $n$-dimension Euclidean space by ($\mathbb R^n$, $\|\cdot\|$). Denote the set of all positive integers by $\mbb N$. We use $\nn\cdot,\cdot\mm$ to denote the standard Euclidean inner product. 
For a matrix $M\in\mathbb R^{m\times m}$, we denote the element in $i$th row and $j$th column by $[M]_{ij}$. For an $n$-dimension Euclidean vector $x$, denote its $i$-th component by $[x]_i$, $i=1,2,...,n$. We use $\mbb P(S)$ to denote the probability of a measurable set $S$. %For a convex function $f:\huaX\rightarrow\mbb R$.
%For two functions $f$ and $g$,  write $f(n)= O(g(n))$ if there exist $N<\infty$ and positive constant $C<\infty$ such that $f(n)\leq Cg(n)$ for $n\geq N$. For a random variable $X$, use $\mathbb E[X]$ to denote its expected value.

\section{Problem setting}
In this work, we focus on a system consisting of $m$ local agents (clients) and a coordinate server. Around this system, we study the following multi-agent optimization problem
\begin{eqnarray} 
	\min_{x\in \huaX} f(x)=\sum_{i=1}^mf_i(x). \label{problem} 
\end{eqnarray}
In \eqref{problem}, $x\in \huaX\subset\mathbb R^n$ is a global decision vector, $\huaX$ is a closed convex decision domain. The function $f_i: \huaX\to \mathbb R$ is the convex objective function known only to the $i$th agent (client).    We suppose that there is at least an optimal point $x^*\in\huaX$ such that $f(x)\geq f(x^*)$ for $x\in\huaX$.

 At each iteration step, $t\geq1$, each agent $i\in[m]$ only has access to a noisy gradient of $f_i$, denoted by $\wh \nabla f_i:\mbb R^n\rightarrow\mbb R^n$. For this stochastic gradient, we make the following basic assumption.
\begin{ass}\label{stochastic_gradient}
For any $i\in[m]$, $\wh \nabla f_i(x)$ is an unbiased gradient estimator in the sense that $\mbb E[\wh \nabla f_i(x)|x]=\nabla f_i(x)$. Moreover, $\wh \nabla f_i(x)$ satisfies the following bounded $p$-moment condition:
\bes
\mbb E\left[\left\|\wh \nabla f_i(x)-\nabla f_i(x)\right\|^p\Big|x\right]\leq\sigma^p
\ees
for some $\sigma>0$ and $p\in(1,2]$.
\end{ass} 

For the local objective functions $f_i$, $i\in[m]$, we consider the following widely-adopted $L$-smoothness condition.
\begin{ass}\label{Lsmooth_condition}
Local objective functions	$f_i$, $i\in[m]$ satisfy the $L$-smoothness condition,  namely, for any $x,y\in\mbb \huaX$,
	\bes
	\|\nabla f_i(x)-\nabla f_i(y)\|\leq L\|x-y\|.
	\ees
	\end{ass} 
In this work, for dealing with the heavy-tailed randomness, we will introduce the gradient clipping operator for handling the stochastic gradients $\wh \nabla f_i$, $i\in[m]$. For each agent $i\in[m]$, let $x_{i,t}$ be its state variable at time instant $t$. We use $\wh \nabla f_i(x_{i,t})$ to denote the stochastic gradient via querying the stochastic gradient oracle. Then we represent the clipped gradient estimator $\ww \nabla_{\lambda_t} f_i(x_{i,t})$ associated with the clipping operator $\ww\nabla_{\lambda_t}$ with the clipping parameter $\lambda_t$ at iteration $t$ by
\bes
\ww \nabla_{\lambda_t} f_i(x_{i,t})=\min\left\{1,\f{\lambda_t}{\|\wh \nabla f_i(x_{i,t})\|}\right\}\wh \nabla f_i(x_{i,t}).
\ees

In this paper, the Bregman divergence $D_\Phi(\cdot\|\cdot)$ associated with a differentiable function $\Phi$ is crucial for formulating the main algorithm (see e.g.,  \cite{llww2018}, \cite{yhy2022},  \cite{hljl2025}).
For a differentiable  function $\ff:\huaX\rightarrow\mbb R$, the Bregman divergence $D_\Phi(x\|y)$ between $x\in\huaX$ and $y\in\huaX$ induced by $\Phi$ is defined as $$D_\Phi(x\|y)=\Phi(x)-\Phi(y)-\nn\nabla\Phi(y), x-y\mm.$$ 
We always call such function $\ff$ a mirror map (distance generating function). Throughout this paper, without loss of generality, we always assume
$\Phi:\huaX\to\mathbb R$ be a  $1$-strongly convex differentiable function, namely, $\Phi(x)-\Phi(y)\geq\nn\nabla\Phi(y), x-y\mm+\f{1}{2}\|x-y\|^2.$ Here, we consider $1$-strong convexity merely for the sake of brevity in the writing, and for general parameter settings, we can achieve corresponding analysis through a trivial scaling process. Such consideration has been adopted in many references on mirror descent e.g. \cite{nnen2023}, \cite{dajj2012}. According to the structure of $D_\Phi(\cdot\|\cdot)$, a direct consequence of the $1$-strong convexity of $\ff$ is the useful relation between Bregman divergence and the classical Euclidean distance:  
$$D_{\Phi}(x\| y)\geq \f{1}{2}\|x-y\|^2.$$ Another useful result about Bregman divergence following directly from the above notion is listed in the following lemma. 
\begin{lem}\label{bregman_3point}
	The Bregman divergence $D_\ppp(\cdot\|\cdot)$ satisfies the three-point identity $$\nn\nabla\Phi(x)-\nabla\Phi(y),y-z\mm=D_{\Phi}(z\|x)-D_{\Phi}(z\|y)-D_{\Phi}(y\|x)$$
	for all $x, y, z\in \huaX$.
\end{lem}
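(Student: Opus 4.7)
The plan is to prove the identity by direct algebraic expansion. Both sides are completely explicit once the Bregman divergences are unfolded through the definition $D_\Phi(u\|v)=\Phi(u)-\Phi(v)-\langle\nabla\Phi(v),u-v\rangle$, so no geometric or structural argument is required; the claim reduces to bookkeeping on $\Phi$-values and inner products.

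First, I would substitute the definition into each of $D_\Phi(z\|x)$, $D_\Phi(z\|y)$, and $D_\Phi(y\|x)$ on the right-hand side. This produces six $\Phi$-value terms and three inner-product terms. The $\Phi(z)$ contributions from $D_\Phi(z\|x)$ and $-D_\Phi(z\|y)$ cancel, and similarly the two $\Phi(x)$ contributions and the two $\Phi(y)$ contributions cancel in pairs, so the entire right-hand side collapses to a sum of three inner products: $-\langle\nabla\Phi(x),z-x\rangle+\langle\nabla\Phi(y),z-y\rangle+\langle\nabla\Phi(x),y-x\rangle$.

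Next, I would group the inner products by the gradient factor. Using linearity in the second argument, the two terms containing $\nabla\Phi(x)$ combine to $\langle\nabla\Phi(x),(y-x)-(z-x)\rangle=\langle\nabla\Phi(x),y-z\rangle$, and the remaining term equals $\langle\nabla\Phi(y),z-y\rangle=-\langle\nabla\Phi(y),y-z\rangle$. Summing these two expressions gives exactly $\langle\nabla\Phi(x)-\nabla\Phi(y),y-z\rangle$, which is the left-hand side.

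There is essentially no obstacle in this argument; the only thing to be careful about is the sign convention when distributing the minus signs in front of $D_\Phi(z\|y)$ and $D_\Phi(y\|x)$, and the direction of the arguments of $\nabla\Phi$ inside each inner product. Notably, the identity does not require $\Phi$ to be strongly convex or even convex, only differentiability on $\huaX$, so no auxiliary hypothesis from the paper is invoked in this lemma.
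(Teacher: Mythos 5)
Your proof is correct and is exactly the direct expansion the paper has in mind: the paper gives no explicit proof, asserting the identity ``follows directly'' from the definition of $D_{\Phi}(\cdot\|\cdot)$, and your cancellation of the $\Phi$-values followed by regrouping of the inner products is precisely that verification. Your closing observation that only differentiability of $\Phi$ is needed (not strong convexity) is also accurate.
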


The following separate convexity assumption is a standard and widely adopted one in the literature of DMD methods, as seen in works such as  \cite{yhhx2020}, \cite{md1}, \cite{lj2021}. It is particularly useful for developing the related convergence theory and many common mirror maps naturally satisfy this assumption.
\begin{ass}\label{separable_convexity_ass}
	The Bregman divergence $D_{\Phi}(\cdot\|\cdot)$  satisfies the separate convexity with respect to the second variable, namely, for any $x\in\huaX$, any $y_j\in\huaX$, $j\in[m]$ and any $a_j\geq 0$, $j\in[m]$ which satisfies $\sum_{j=1}^ma_j=1$,  it holds that $$D_{\Phi}(x\| \sum_{j=1}^ma_jy_j)\leq\sum_{j=1}^ma_jD_{\Phi}(x\|y_j).$$
\end{ass}
%An example that implies separable convexity of $D_s(x,y)$ is when the distance-generating function $\Phi$ is three times continuously differentiable and the Hessian matrix $\mathcal{H}=\nabla^2\Phi$ and $\mathcal{H}(y)+\nabla \mathcal{H}(y)(y-x)$ are all positive semidefinite for any $x,y\in \huaX$. 

%In what follows, the standard assumption on the graph $G_t=(\huaV,\huaE_t, W_t)$ is made.

\section{The main algorithm and convergence analysis}
In this section, we first introduce the update mechanism of the algorithm, followed by the the  convergence results and their related overall convergence analysis and proofs.

We develop a so-called clipped federated stochastic mirror descent (Clipped FedSMD) method to solve the multi-agent optimization problem \eqref{problem}. The updating rule of the Clipped FedSMD method is described in Algorithm 1. In our method, we require communications between the central server and the local agents  periodically. We set the communication period to be a constant $\p\in\mbb N$. Then we denote the set of   communication instants  by
%restrict all server-agent interactions to a set $\mathcal{I}$ of synchronization time-steps, where
\be\mathcal{I}=\left\{t_{1}, \ldots, t_{\huaT}\right\} \label{defhuat}
\ee
with  $$t_i=1+i\p, \  \ i=1,2,...,\huaT$$ and $T=t_{\huaT}$. Thus, over the time horizon $T$, there are precisely $\huaT$ communication rounds. Between two successive communication rounds, each agent independently performs a type of clipped SMD algorithm. When the time instant falls within the communication instant set $\huaI$,  the local agents  upload their local state variables to the coordinate server. The coordinate server collects these local information of state variables and performs an averaging operation to update a new average state variable. Subsequently, the coordinate server sends the averaged variable to these local agents. The local agents receive the average variable and perform further updates to complete an iteration step.

\begin{algorithm}[htbp]
	%\begin{small}
	\caption{Clipped Federated Stochastic Mirror Descent}
	\begin{algorithmic}\label{main_algorithm}
		\State{\textbf{Initialization}: Initialize points $x_{i,1}\in \huaX$, $ i\in [m]$, step sizes $\{\aaa_t\}$, clipping parameters $\{\lambda_t\}$,  fix the communication period $\p$, input the total iteration $T=\p\huaT+1$ with total  communication rounds $\huaT$.}
		
		\State{\textbf{Updating rule}: For $k=1,...,T$, for agents $i=1,2,...,m$:
			
			\noi (1) Compute the local clipped stochastic gradient 
			$$\ww \nabla_{\lambda_t} f_i(x_{i,t})=\min\left\{1,\f{\lambda_t}{\|\wh \nabla f_i(x_{i,t})\|}\right\}\wh \nabla f_i(x_{i,t}).$$
			
			\noi (2) Compute auxiliary local variable $y_{i, t+1}$ as
			$$
			y_{i,t+1}=\arg\min_{x\in \huaX}\Big\{\left\nn \ww \nabla_{\lambda_t} f_i(x_{i,t}),x\right\mm+\f{1}{\aaa_t}D_{\Phi}(x\|x_{i,t})\Big\}.
			$$
			%\end{eqnarray}
			
			\noi (3)  If $t+1 \in \mathcal{I}$ then upload $y_{i, t+1}$ to the coordinate server, receive $\f{1}{m} \sum_{i =1}^m y_{i, t+1}$ from the server, and update the state variable  as
			$$
			x_{i, t+1}=\bar{y}_{t+1} := \frac{1}{m} \sum_{i=1}^{m} y_{i, t+1}.
			$$
			Else, update the local variable by setting $$x_{i,t+1}=y_{i,t+1}.$$
		}
		\end{algorithmic}
	%\end{small}
\end{algorithm}

It is worth mentioning that during each update step of our algorithm, there is no direct information exchange among the local agents, as is witnessed in some distributed optimization algorithms. Instead, at the communication instant time points, the agents communicate indirectly through the coordinate server as a mediator. This federated learning update mechanism provides our algorithm with a strong privacy protection advantage when addressing DSO problems in heavy-tailed noise environments, establishing a structural foundation for the information security of the agents.

Now we conduct the convergence analysis of the algorithm. For handling the stochasticity, let us define $\huaF_t=\sigma(\bigcup_{i=1}^m\{\wh\nabla f_i(x_{i,1}), \wh\nabla f_i(x_{i,2}), ..., \wh\nabla f_i(x_{i,t})\})$,  the sigma-algebra generated by the entire history of the randomness till step $t$ by $\huaF_t$ for $t\geq1$,  %(namely, $\huaF_t$ is generated by $\{\xi_{i,s}|i\in[m],s=1,2,...,t\}$) 
and $\huaF_0=\{\emptyset,\huaX\}$. Thus, we can see that $\huaF_t$ naturally forms a filtration. Equipped with the above notions, for each agent $i\in[m]$ and iteration $t$, we introduce the following three notions for further handling the stochasticity arising from the stochastic gradient, namely, the stochastic error $\ttt_{i,t}$, the deviation $\ttt_{i,t}^u$ and the bias of the clipped stochastic gradient estimator $\ttt_{i,t}^b$:
\bea
&&\ttt_{i,t}=\ww\nabla_{\lambda_t} f_i(x_{i,t})-\nabla f_i(x_{i,t}),\\
&&\ttt_{i,t}^u=\ww \nnn_{\lambda_t} f_i(x_{i,t})-\mbb E[\ww \nnn_{\lambda_t} f_i(x_{i,t})|\huaF_{t-1}],\\
&&\ttt_{i,t}^b=\mbb E[\ww\nnn_{\lambda_t} f_i(x_{i,t})|\huaF_{t-1}]-\nnn f_i(x_{i,t}).
\eea
The above three quantities are useful for handling the clipped gradients $\ww \nabla_{\lambda_t} f_i(x_{i,t})$, $i\in[m]$. It is easy to see from the above notions, there holds the basic relation $\ttt_{i,t}=\ttt_{i,t}^u+\ttt_{i,t}^b$. The next lemma provides basic upper bound estimates related to $\ttt_{i,t}$, $\ttt_{i,t}^u$ and $\ttt_{i,t}^b$ (see e.g., \cite{ylw2025}, \cite{nnen2023}, \cite{zhang2020}). 
\begin{lem}\label{bound_gradient_lemma}
	Under Assumptions \ref{stochastic_gradient} and \ref{Lsmooth_condition}, for $t\geq1$ and $i\in[m]$, it holds that
	\bes
	\|\ttt_{i,t}^u\|\leq2\lambda_t.
	\ees
	Moreover, if $\|\nnn f_i(x_{i,t})\|\leq\f{\lambda_t}{2}$, $i\in[m]$, then it holds that
	\bes
	&&\|\ttt_{i,t}^b\|\leq4\sigma^p\lambda_t^{1-p},\\
	&&\mbb E[\|\ttt_{i,t}^u\|^2|\huaF_{t-1}]\leq40\sigma^p\lambda_t^{2-p}.
	\ees
\end{lem}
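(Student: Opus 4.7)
My plan is to establish all three bounds directly from the definition of the clipping operator together with the $p$-moment assumption, treating the two events $\{\|\widehat{\nabla}f_i(x_{i,t})\|\le\lambda_t\}$ (unclipped) and $\{\|\widehat{\nabla}f_i(x_{i,t})\|>\lambda_t\}$ (clipped) separately. Throughout, I will abbreviate $g=\widehat{\nabla}f_i(x_{i,t})$, $\widetilde{g}=\widetilde{\nabla}_{\lambda_t}f_i(x_{i,t})$, $\nabla=\nabla f_i(x_{i,t})$, $\lambda=\lambda_t$, and use the basic fact that by construction $\|\widetilde{g}\|\le\lambda$.

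\textbf{Step 1: the uniform bound $\|\theta^u_{i,t}\|\le 2\lambda_t$.} This is immediate from $\|\widetilde{g}\|\le\lambda$: conditional Jensen gives $\|\mathbb{E}[\widetilde{g}\mid\huaF_{t-1}]\|\le\lambda$, and the triangle inequality applied to $\theta^u_{i,t}=\widetilde{g}-\mathbb{E}[\widetilde{g}\mid\huaF_{t-1}]$ closes the argument. No condition on $\|\nabla\|$ is needed here.

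\textbf{Step 2: the bias bound under $\|\nabla\|\le\lambda/2$.} Since $\mathbb{E}[g\mid\huaF_{t-1}]=\nabla$ by Assumption~\ref{stochastic_gradient}, I rewrite $\theta^b_{i,t}=\mathbb{E}[\widetilde{g}-g\mid\huaF_{t-1}]$. A direct calculation from the definition gives $\widetilde{g}-g=(\lambda/\|g\|-1)g\,\mathbb{1}_{\{\|g\|>\lambda\}}$, so $\|\widetilde{g}-g\|=(\|g\|-\lambda)\mathbb{1}_{\{\|g\|>\lambda\}}$. The key observation is that on $\{\|g\|>\lambda\}$ the reverse triangle inequality combined with $\|\nabla\|\le\lambda/2$ yields $\|g-\nabla\|\ge\|g\|-\lambda/2\ge\|g\|-\lambda$, and simultaneously $\|g-\nabla\|>\lambda/2$. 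Hence
\[
\|\widetilde{g}-g\|\le \|g-\nabla\|\,\mathbb{1}_{\{\|g-\nabla\|>\lambda/2\}}.
\]
Bounding the indicator by $(2\|g-\nabla\|/\lambda)^{p-1}$ and taking conditional expectation, the Markov-type inequality $\mathbb{E}[X\,\mathbb{1}_{\{X>a\}}]\le\mathbb{E}[X^p]/a^{p-1}$ applied with $X=\|g-\nabla\|$, $a=\lambda/2$ and the $p$-moment bound $\sigma^p$ gives $\|\theta^b_{i,t}\|\le 2^{p-1}\sigma^p\lambda^{1-p}\le 4\sigma^p\lambda^{1-p}$ since $p\le2$.

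\textbf{Step 3: the conditional second-moment bound under $\|\nabla\|\le\lambda/2$.} I start from $\mathbb{E}[\|\theta^u_{i,t}\|^2\mid\huaF_{t-1}]\le\mathbb{E}[\|\widetilde{g}-\nabla\|^2\mid\huaF_{t-1}]$, which follows from $\mathbb{E}\|X-\mathbb{E}X\|^2\le\mathbb{E}\|X-c\|^2$ for any constant $c$. I then split by the clipping event. On $\{\|g\|\le\lambda\}$, $\widetilde{g}-\nabla=g-\nabla$ and the bound $\|g-\nabla\|\le\|g\|+\|\nabla\|\le 3\lambda/2$ allows the interpolation $\|g-\nabla\|^2\le(3\lambda/2)^{2-p}\|g-\nabla\|^p$, whose expectation is at most $(3/2)^{2-p}\sigma^p\lambda^{2-p}$. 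On $\{\|g\|>\lambda\}$, $\|\widetilde{g}-\nabla\|\le\lambda+\|\nabla\|\le 3\lambda/2$, and Markov's inequality applied as in Step 2 bounds the probability of this event by $2^p\sigma^p/\lambda^p$, giving a contribution $\le(9/4)\cdot 2^p\sigma^p\lambda^{2-p}$. Summing and absorbing constants for $p\in(1,2]$ produces a bound of the form $C\sigma^p\lambda^{2-p}$, well inside the claimed $40\sigma^p\lambda^{2-p}$.

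The main technical step is the event-splitting argument in Steps 2 and 3: one must simultaneously exploit the tail assumption on $\|g-\nabla\|$ and the hypothesis $\|\nabla\|\le\lambda/2$ so that the indicator $\mathbb{1}_{\{\|g\|>\lambda\}}$ can be traded for an indicator on the noise exceeding $\lambda/2$, which is exactly what the $p$-moment control of $\sigma^p$ can handle. Once this reduction is made, the remaining steps are routine Markov/Chebyshev-type estimates and interpolation between the $p$-th and $2$nd moments.
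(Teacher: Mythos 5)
Your proof is correct. The paper itself gives no proof of this lemma, only pointers to the literature, and your argument is essentially the standard one from those references: the uniform bound via $\|\ww\nabla_{\lambda_t}f_i(x_{i,t})\|\leq\lambda_t$ and Jensen, and the two conditional bounds via the key inclusion $\{\|\wh\nabla f_i(x_{i,t})\|>\lambda_t\}\subset\{\|\wh\nabla f_i(x_{i,t})-\nabla f_i(x_{i,t})\|>\lambda_t/2\}$ under $\|\nabla f_i(x_{i,t})\|\leq\lambda_t/2$, followed by Markov-type estimates and interpolation between the $p$-th and second moments. Your constants ($2^{p-1}$ for the bias and roughly $10.5$ for the second moment) are in fact sharper than the stated $4$ and $40$, so the lemma follows with room to spare.
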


In our subsequent analysis, we also require a useful inequality for handling randomness, which is listed in the following lemma.
 (see e.g., \cite{nnen2023}, \cite{bllrs2011}).
\begin{lem}\label{random_ine_lemma}
	Suppose $X$ is a random variable which satisfies $\mbb E[X]=0$ and $|X|\leq M$ almost surely. Then for $0\leq\lambda\leq\f{1}{M}$, it holds that
	\bes
	\mbb E[\exp(\lambda X)]\leq\exp(\f{3}{4}\lambda^2\mbb E[X^2]).
	\ees
\end{lem}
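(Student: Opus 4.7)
The plan is to derive the bound via a term-by-term Taylor expansion of $\exp(\lambda X)$, using the almost-sure bound on $X$ to dominate the higher moments by $\mathbb{E}[X^2]$, and then absorbing the resulting numerical series into the constant $3/4$.

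First I would write $\exp(\lambda X) = 1 + \lambda X + \sum_{k \geq 2} \frac{(\lambda X)^k}{k!}$ and take expectations. Since $\mathbb{E}[X] = 0$, the linear term drops out, giving
$$\mathbb{E}\bigl[\exp(\lambda X)\bigr] = 1 + \sum_{k \geq 2} \frac{\lambda^k \mathbb{E}[X^k]}{k!}.$$
The key observation is that, for $k \geq 2$, the almost-sure bound $|X| \leq M$ yields $|X^k| \leq M^{k-2} X^2$, so $|\mathbb{E}[X^k]| \leq M^{k-2}\mathbb{E}[X^2]$. Substituting this and factoring out $\lambda^2 \mathbb{E}[X^2]$ gives
$$\mathbb{E}\bigl[\exp(\lambda X)\bigr] \leq 1 + \lambda^2 \mathbb{E}[X^2] \sum_{k \geq 2} \frac{(\lambda M)^{k-2}}{k!}.$$

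Next, I would exploit the hypothesis $\lambda M \leq 1$ to bound the tail sum term-by-term: $\sum_{k \geq 2} \frac{(\lambda M)^{k-2}}{k!} \leq \sum_{k \geq 2} \frac{1}{k!} = e - 2$. The numerical check $e - 2 \approx 0.718 < 3/4$ is exactly what produces the stated constant. Finally, the standard inequality $1 + y \leq \exp(y)$ (applied with $y = \tfrac{3}{4}\lambda^2\mathbb{E}[X^2]$) converts the additive bound into the desired multiplicative exponential form.

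The only delicate point is the numerical inequality $e - 2 \leq 3/4$; everything else is a mechanical use of Taylor expansion and the moment domination. Since the proof is elementary and the constant $3/4$ is chosen precisely so that this numerical slack is available, I do not anticipate any substantive obstacle — the lemma is a standard moment-generating-function bound of Bernstein type, and the proof outlined above reproduces it cleanly.
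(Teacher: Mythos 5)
Your proof is correct: the moment domination $|\mathbb{E}[X^k]|\leq M^{k-2}\mathbb{E}[X^2]$ for $k\geq 2$, the bound $\sum_{k\geq 2}(\lambda M)^{k-2}/k!\leq e-2<3/4$ under $\lambda M\leq 1$, and the final step $1+y\leq e^y$ all go through. The paper states this lemma without proof, citing the literature, and the standard cited argument is exactly this Bernstein-type Taylor-expansion bound (equivalently, the pointwise inequality $e^y\leq 1+y+(e-2)y^2$ for $y\leq 1$), so your route matches it.
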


Before giving our first result, let us recall the definition of $\huaI$ that represents  the set of communication time instants in \eqref{defhuat}. we define a sequence $\tau(t)$ by
%$$\tau(t)=\max\{t'\in\huaI:t'<t\}, \ \text{if} \  t>1+p$$ and $\tau(t)=1$ if $1\leq t\leq1+p$,
\be
\tau(t)=\left\{
\begin{aligned}
	\max\{t'\in\huaI:t'\leq t\}, &\quad \mbox{if} \ t>1+\p,\\
	1, \qquad \qquad &\quad \mbox{if} \ 1\leq t\leq1+\p.\\
\end{aligned}
\right. \label{deftau}
\ee
The following result provides an important consensus bound for the state deviation from the average state for the Clipped FedSMD  in the context of heavy-tailed stochastic gradient noises.

\begin{pro}\label{basic_consensus}
%Under Assumptions \ref{as1}, 
Under Assumption \ref{stochastic_gradient}, for any $i\in[m]$, it holds that
	\bes
&&\|x_{i,t}-\bar x_t\|\leq 2\sum_{s=\tau(t)}^{t-1}\aaa_s\lambda_s.
\ees
\end{pro}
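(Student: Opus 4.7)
The plan is to establish, step by step, a per-step bound $\|x_{i,s+1}-x_{i,s}\|\leq \alpha_s\lambda_s$ valid between two successive communication instants, and then telescope it from $\tau(t)$ to $t$ for both $x_{i,t}$ and $\bar x_t$.

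First I would exploit the first-order optimality condition for the subproblem defining $y_{i,s+1}$, testing it against the feasible point $x_{i,s}\in\huaX$. This yields
\begin{equation*}
\bigl\langle \ww\nabla_{\lambda_s}f_i(x_{i,s})+\tfrac{1}{\alpha_s}\bigl(\nabla\ff(y_{i,s+1})-\nabla\ff(x_{i,s})\bigr),\,x_{i,s}-y_{i,s+1}\bigr\rangle\ge 0.
\end{equation*}
Rearranging and invoking the three-point identity (Lemma~\ref{bregman_3point}) converts the $\nabla\ff$-inner product into $D_\ff(y_{i,s+1}\|x_{i,s})+D_\ff(x_{i,s}\|y_{i,s+1})$, which by $1$-strong convexity of $\ff$ is at least $\|y_{i,s+1}-x_{i,s}\|^{2}$. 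Combining with Cauchy--Schwarz and the clipping bound $\|\ww\nabla_{\lambda_s}f_i(x_{i,s})\|\le\lambda_s$ (immediate from the definition of the clipping operator), I obtain
\begin{equation*}
\tfrac{1}{\alpha_s}\|y_{i,s+1}-x_{i,s}\|^{2}\le \lambda_s\,\|y_{i,s+1}-x_{i,s}\|,
\end{equation*}
hence $\|y_{i,s+1}-x_{i,s}\|\le \alpha_s\lambda_s$.

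Next I would observe that for $\tau(t)\le s\le t-1$ no communication occurs strictly after $\tau(t)$ up to time $t$ (by maximality in the definition \eqref{deftau}), so $x_{i,s+1}=y_{i,s+1}$ on this window. Telescoping gives
\begin{equation*}
\|x_{i,t}-x_{i,\tau(t)}\|\le\sum_{s=\tau(t)}^{t-1}\alpha_s\lambda_s.
\end{equation*}
Averaging the same telescoped identity over $i\in[m]$ and using the triangle inequality yields the identical bound for $\|\bar x_t-\bar x_{\tau(t)}\|$.

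Finally, since $\tau(t)\in\huaI$ (or $\tau(t)=1$ under synchronized initialization), the server averaging step forces $x_{i,\tau(t)}=\bar y_{\tau(t)}=\bar x_{\tau(t)}$ for every agent $i$. Therefore
\begin{equation*}
\|x_{i,t}-\bar x_t\|\le \|x_{i,t}-x_{i,\tau(t)}\|+\|\bar x_{\tau(t)}-\bar x_t\|\le 2\sum_{s=\tau(t)}^{t-1}\alpha_s\lambda_s,
\end{equation*}
which is exactly the claim. The only delicate step I anticipate is the per-iteration displacement bound: it is tempting to bound $\|y_{i,s+1}-x_{i,s}\|$ by using only $D_\ff(y_{i,s+1}\|x_{i,s})\geq\tfrac12\|y_{i,s+1}-x_{i,s}\|^{2}$, which would lose a factor of $2$ and give the wrong constant; leveraging both Bregman terms simultaneously via the three-point identity is what makes the final constant $2$ (rather than $4$) work out.
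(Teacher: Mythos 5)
Your proposal is correct and follows essentially the same route as the paper: the per-step displacement bound $\|y_{i,s+1}-x_{i,s}\|\le\alpha_s\lambda_s$ is obtained exactly as in the paper (first-order optimality tested at $x_{i,s}$, strong monotonicity of $\nabla\ff$ via the two Bregman terms, Cauchy--Schwarz, and the clipping bound), and the factor $2$ then comes from comparing agent $i$ and the average back to the common anchor $\bar x_{\tau(t)}$. The only cosmetic difference is that you pass through $\bar x_{\tau(t)}$ with a triangle inequality while the paper subtracts the two telescoped identities directly (retaining $(1-\tfrac1m)$ and $\tfrac1m$ weights before relaxing to $2$); your closing remark about needing both Bregman terms to avoid a spurious factor of $2$ is exactly the right observation.
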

\begin{proof}
	For $t\in\huaI$, it is trivial to see from the Clipped FedSMD algorithm that $x_{i,t}=\bar{x}_t$. For $t\notin\huaI$,  we have that
$$x_{i,t}=\bar{x}_{\tau(t)}+\sum_{s=\tau(t)}^{t-1}(x_{i,s+1}-x_{i,s}).$$
Taking averages of the above equation among agents, we have
\be
\bar{x}_{t}=\bar{x}_{\tau(t)}+\f{1}{m}\sum_{j=1}^m\sum_{s=\tau(t)}^{t-1}(x_{j,s+1}-x_{j,s}).\label{noexp}
\ee
The above two equations imply that
\bea
\begin{aligned}
\|x_{i,t}-\bar{x}_t\|\leq&\left(1-\f{1}{m}\right)\sum_{s=\tau(t)}^{t-1}\|x_{i,s+1}-x_{i,s}\|\\
&+\f{1}{m}\sum_{j\in[m]\setminus \{i\}}\sum_{s=\tau(t)}^{t-1}\|x_{j,s+1}-x_{j,s}\|.\label{consensus_bdd}
\end{aligned}
\eea
For $s=\tau(t), \ldots, t-1$, since $s+1\notin\huaI$, from the iterative updating rule of the main algorithm, we have
$$x_{i,s+1}=\arg\min_{x\in \huaX}\Big\{\left\nn \ww \nabla_{\lambda_t} f_i(x_{i,s}),x\right\mm+\f{1}{\aaa_s}D_{\Phi}(x\|x_{i,s})\Big\}.
$$
Applying the first-order optimality condition to the above equation implies that, for any $x\in\huaX$, it holds that
\bes
\left\nn\aaa_s \ww \nabla_{\lambda_t} f_i(x_{i,s})+\nabla\ff(x_{i,s+1})-\nabla\ff(x_{i,s}),x-x_{i,s+1}\right\mm\geq0.
\ees
By setting $x=x_{i,s}$ in the above inequality, we have
\bes
&&\nn\nb\ff(x_{i,s})-\nb\ff(x_{i,s+1}),x_{i,s}-x_{i,s+1}\mm\\
&&\leq\left\nn \aaa_s \ww \nabla_{\lambda_t} f_i(x_{i,s}),x_{i,s}-x_{i,s+1}\right\mm.
\ees
Applying the Schwarz inequality to the right hand side and $1$-strong convexity of $\ff$ to the left hand side of above inequality, we have
\bes
\|x_{i,s}-x_{i,s+1}\|^2\leq\aaa_s\|\ww \nabla_{\lambda_t} f_i(x_{i,s})\|      \|x_{i,s}-x_{i,s+1}\|.
\ees
Eliminating the same term $\|x_{i,s}-x_{i,s+1}\|$ on both sides of the above inequality, we obtain that 
\bes
\|x_{i,s}-x_{i,s+1}\|\leq\aaa_s\|\ww \nabla_{\lambda_t} f_i(x_{i,s})\|\leq\aaa_s\lambda_s.      
\ees
Here we have used the fact that
\bes
\beal
\|\ww \nabla_{\lambda_t} f_i(x_{i,s})\|=&\left\|\min\left\{1,\f{\lambda_s}{\|\wh \nabla f_i(x_{i,s})\|}\right\}\wh \nabla f_i(x_{i,s})\right\|\leq\lambda_s.
\eeal
\ees
Now we go back to \eqref{consensus_bdd} and substitute the above relation, we finally obtain that
	\bes
\begin{aligned}
	\mbb \|x_{i,t}-\bar{x}_t\|\leq&\left(1-\f{1}{m}\right)\sum_{s=\tau(t)}^{t-1}\aaa_s\lambda_s\\
	&+\f{1}{m}\sum_{j\in[m]\setminus\{i\}}\sum_{s=\tau(t)}^{t-1}\aaa_s\lambda_s\leq2\sum_{s=\tau(t)}^{t-1}\aaa_s\lambda_s.
\end{aligned}
\ees
We finish the proof.
\end{proof}

The above result is a crucial consensus bound related to the state variables of all the agents in terms of the stepsizes and clipping parameters. We need to point out that it is precisely due to the influence of this consensus bound in this distributed algorithm that the selection of the corresponding step size and clipping parameters differs fundamentally from that in non-distributed scenarios (e.g. \cite{nnen2023}) because we need to provide a strictly decaying strategy for the consensus bound. This is crucial to ensure the final convergence of the Clipped FedSMD as witnessed in the subsequent analysis.

The following estimate is fundamental for establishing the convergence theory of the Clipped FedSMD method.
\begin{pro}\label{inner_product_est1}
Under Assumptions \ref{stochastic_gradient} and \ref{separable_convexity_ass}, there holds
\bes
\nono&&\sum_{i=1}^m\left\nn\aaa_t\ww\nabla_{\lambda_t} f_i(x_{i,t}),x_{i,t}-x^*\right\mm\\
&&\leq \sum_{i=1}^m\Big[D_{\ff}(x^*\|x_{i,t})-D_{\ff}(x^*\|x_{i,t+1})\Big]\\
&&\quad+\f{\aaa_t^2}{2}\sum_{i=1}^m\left\|\ww\nabla_{\lambda_t} f_i(x_{i,t})\right\|^2.
\ees	
\end{pro}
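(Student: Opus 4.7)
The goal is to bound $\sum_i \langle \alpha_t \widetilde\nabla_{\lambda_t} f_i(x_{i,t}), x_{i,t}-x^*\rangle$ by a telescoping Bregman difference plus a gradient-squared term. The natural path is: (i) apply the first-order optimality condition for the mirror-descent subproblem defining $y_{i,t+1}$; (ii) convert the resulting inner product involving $\nabla\Phi$ differences into Bregman divergences via the three-point identity (Lemma~\ref{bregman_3point}); (iii) split the target inner product across the anchor $y_{i,t+1}$ and absorb the leftover piece using Young's inequality and $1$-strong convexity; (iv) reconcile the $y_{i,t+1}$ iterate with the actual state $x_{i,t+1}$ by handling the two cases $t+1 \in \huaI$ and $t+1 \notin \huaI$ separately.

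First I would fix $i$ and use the optimality condition at $y_{i,t+1}$ with the test point $x=x^*$ to obtain
\[
\alpha_t\langle \ww\nabla_{\lambda_t} f_i(x_{i,t}), y_{i,t+1}-x^*\rangle \leq \langle \nabla\ff(y_{i,t+1})-\nabla\ff(x_{i,t}), x^*-y_{i,t+1}\rangle.
\]
Applying Lemma~\ref{bregman_3point} with $(x,y,z)=(x_{i,t},y_{i,t+1},x^*)$ turns the right-hand side into $D_\ff(x^*\|x_{i,t}) - D_\ff(x^*\|y_{i,t+1}) - D_\ff(y_{i,t+1}\|x_{i,t})$. Next I would write
\[
\langle\ww\nabla_{\lambda_t} f_i(x_{i,t}), x_{i,t}-x^*\rangle = \langle\ww\nabla_{\lambda_t} f_i(x_{i,t}), x_{i,t}-y_{i,t+1}\rangle + \langle\ww\nabla_{\lambda_t} f_i(x_{i,t}), y_{i,t+1}-x^*\rangle,
\]
multiply by $\alpha_t$, and bound the first inner product by Cauchy--Schwarz followed by Young's inequality $ab\le\tfrac{a^2}{2}+\tfrac{b^2}{2}$, producing $\tfrac{\alpha_t^2}{2}\|\ww\nabla_{\lambda_t}f_i(x_{i,t})\|^2 + \tfrac{1}{2}\|x_{i,t}-y_{i,t+1}\|^2$. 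The $1$-strong convexity of $\ff$ gives $\tfrac{1}{2}\|x_{i,t}-y_{i,t+1}\|^2\leq D_\ff(y_{i,t+1}\|x_{i,t})$, which exactly cancels the leftover $-D_\ff(y_{i,t+1}\|x_{i,t})$ term from the three-point step. This yields the per-agent bound
\[
\alpha_t\langle\ww\nabla_{\lambda_t} f_i(x_{i,t}), x_{i,t}-x^*\rangle \leq D_\ff(x^*\|x_{i,t}) - D_\ff(x^*\|y_{i,t+1}) + \tfrac{\alpha_t^2}{2}\|\ww\nabla_{\lambda_t} f_i(x_{i,t})\|^2.
\]

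The subtle step, which I expect to be the main obstacle, is that the statement has $D_\ff(x^*\|x_{i,t+1})$ on the right, not $D_\ff(x^*\|y_{i,t+1})$. To bridge this after summing over $i$, I would split on whether $t+1\in\huaI$. When $t+1\notin\huaI$, the algorithm sets $x_{i,t+1}=y_{i,t+1}$, so the two quantities coincide. When $t+1\in\huaI$, every agent resets to the common average $x_{i,t+1}=\bar y_{t+1}=\tfrac{1}{m}\sum_j y_{j,t+1}$; here Assumption~\ref{separable_convexity_ass} is the crucial ingredient: it gives
\[
D_\ff(x^*\|\bar y_{t+1}) \leq \frac{1}{m}\sum_{j=1}^m D_\ff(x^*\|y_{j,t+1}),
\]
so summing the left-hand side over the $m$ identical agents yields $\sum_{i=1}^m D_\ff(x^*\|x_{i,t+1}) \leq \sum_{j=1}^m D_\ff(x^*\|y_{j,t+1})$. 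Hence in either case $\sum_i D_\ff(x^*\|y_{i,t+1}) \geq \sum_i D_\ff(x^*\|x_{i,t+1})$, and adding the per-agent inequalities gives the claim. No other hypothesis from Assumption~\ref{stochastic_gradient} is actually needed for this deterministic step; its role is simply that the inequality holds pathwise for the clipped gradients appearing in the algorithm.
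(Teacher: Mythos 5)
Your proposal is correct and follows essentially the same route as the paper's proof: first-order optimality at $y_{i,t+1}$ with test point $x^*$, the three-point identity, Young's inequality combined with $1$-strong convexity to cancel $D_\ff(y_{i,t+1}\|x_{i,t})$, and the case split on $t+1\in\huaI$ using separate convexity to replace $y_{i,t+1}$ by $x_{i,t+1}$. Your remark that Assumption~\ref{stochastic_gradient} plays no real role in this deterministic step is accurate, but the argument is otherwise identical to the paper's.
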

\begin{proof}
	According to the first-order optimality, for any $x\in\huaX$, it holds that
	\be
	\nono\nn\nb\ff(y_{i,t+1})-\nb\ff(x_{i,t})+\aaa_t\ww\nnn_{\lambda_t} f_i(x_{i,t}),x-y_{i,t+1}\mm\geq0.
	\ee
	Setting $x=x^*$ in above inequality, and rearranging terms, we have
	\bea
	\nono&&\left\nn\aaa_t\ww\nnn_{\lambda_t} f_i(x_{i,t}),y_{i,t+1}-x^*\right\mm   \\
	\nono&&\leq\nn\nb\ff(y_{i,t+1})-\nb\ff(x_{i,t}),x^*-y_{i,t+1}\mm\\
	\nono&&= D_{\ff}(x^*\|x_{i,t})-D_{\ff}(x^*\|y_{i,t+1})-D_{\ff}(y_{i,t+1}\|x_{i,t})\\
	\nono&&\leq D_{\ff}(x^*\|x_{i,t})-D_{\ff}(x^*\|y_{i,t+1})-\f{1}{2}\|y_{i,t+1}-x_{i,t}\|^2
	\eea
	in which the first equality follows from the three point identity in Lemma \ref{bregman_3point} and the second inequality follows from the definition of $D_{\ff}(\cdot,\cdot)$ and $1$-strong convexity of $\ff$. Also, note that
	\bea
	\nono&&\left\nn\aaa_t\ww\nabla_{\lambda_t} f_i(x_{i,t}),y_{i,t+1}-x^*\right\mm\\
	\nono&&=\left\nn\aaa_t\ww\nabla_{\lambda_t} f_i(x_{i,t}),y_{i,t+1}-x_{i,t}\right\mm+\left\nn\aaa_t\ww\nabla_{\lambda_t} f_i(x_{i,t}),x_{i,t}-x^*\right\mm\\
	\nono&& \geq-\f{\aaa_t^2}{2}\left\|\ww\nabla_{\lambda_t} f_i(x_{i,t})\right\|^2-\f{1}{2}\|y_{i,t+1}-x_{i,t}\|^2\\
	&&\quad +\left\nn\aaa_t\ww\nabla_{\lambda_t} f_i(x_{i,t}),x_{i,t}-x^*\right\mm. \label{nei2}
	\eea
	Combining the above estimates, we obtain that
	\bes
	\nono&&\left\nn\aaa_t\ww\nabla_{\lambda_t} f_i(x_{i,t}),x_{i,t}-x^*\right\mm\\
	&&\leq \Big[D_{\ff}(x^*\|x_{i,t})-D_{\ff}(x^*\|y_{i,t+1})\Big]+\f{\aaa_t^2}{2}\left\|\ww\nabla_{\lambda_t} f_i(x_{i,t})\right\|^2.
	\ees
Then it follows that
\bes
\nono&&\left\nn\aaa_t\ww\nabla_{\lambda_t} f_i(x_{i,t}),x_{i,t}-x^*\right\mm\\
&&\leq \Big[D_{\ff}(x^*\|x_{i,t})-D_{\ff}(x^*\|x_{i,t+1})\Big]\\
&&+\Big[D_{\ff}(x^*\|x_{i,t+1})-D_{\ff}(x^*\|y_{i,t+1})\Big]\\
&&+\f{\aaa_t^2}{2}\left\|\ww\nabla_{\lambda_t} f_i(x_{i,t})\right\|^2.
\ees
After taking summation on both sides of the above inequality, we have
\bes
\nono&&\sum_{i=1}^m\left\nn\aaa_t\ww\nabla_{\lambda_t} f_i(x_{i,t}),x_{i,t}-x^*\right\mm\\
&&\leq \sum_{i=1}^m\Big[D_{\ff}(x^*\|x_{i,t})-D_{\ff}(x^*\|x_{i,t+1})\Big]\\
&&+\sum_{i=1}^m\Big[D_{\ff}(x^*\|x_{i,t+1})-D_{\ff}(x^*\|y_{i,t+1})\Big]\\
&&+\f{\aaa_t^2}{2}\sum_{i=1}^m\left\|\ww\nabla_{\lambda_t} f_i(x_{i,t})\right\|^2.
\ees
If $t+1\notin\huaI$, according to the  structure of our algorithm, $x_{i,t+1}=y_{i,t+1}$, $i\in[m]$, then 
$$D_{\ff}(x^*\|x_{i,t+1})-D_{\ff}(x^*\|y_{i,t+1})=0.$$
If $t+1\in\huaI$, $x_{i,t+1}=\bar y_{t+1}$, then, as a result of the separate convexity assumption of the Bregman divergence, we have
\bes
&&\sum_{i=1}^m\Big[D_{\ff}(x^*\|x_{i,t+1})-D_{\ff}(x^*\|y_{i,t+1})\Big]\\
&&=m\Big[D_{\ff}(x^*\|\bar y_{t+1})-\f{1}{m}D_{\ff}(x^*\|y_{i,t+1})\Big]\leq0.
\ees
Hence we arrive at 
$\sum_{i=1}^m\nn\aaa_t\ww\nabla_{\lambda_t} f_i(x_{i,t}),x_{i,t}-x^*\mm\leq \sum_{i=1}^m[D_{\ff}(x^*\|x_{i,t})-D_{\ff}(x^*\|x_{i,t+1})]+\f{\aaa_t^2}{2}\sum_{i=1}^m \|\ww\nabla_{\lambda_t} f_i(x_{i,t}) \|^2$.
We complete the proof.
\end{proof}

\begin{pro}\label{pro3}
Under Assumptions \ref{stochastic_gradient}-\ref{separable_convexity_ass}, denote a non-decreasing sequence $\{V_t\}$ as $$V_t=2m\max_{i\in[m], 1\leq s\leq t}\left\{\sqrt{2D_\ff(x^*\|x_{i,s})}\right\}+A$$ with some positive constant $A\geq8m$. If the stepsize $\aaa_t$ and the clipping parameter $\lambda_t$ satisfy $\aaa_t\lambda_t\leq1$, then for any  $\ell\in[m]$,  there holds, with probability at least $1-\delta$, for any $S\in[T]$,
	\bes
	&&\sum_{t=1}^S\f{\aaa_t}{V_t}\Big[ f(x_{\ell,t})-f(x^*)\Big]+\f{1}{V_S}\sum_{i=1}^mD_\ff(x^*\|x_{i,S+1})\\
	&&\leq\log\f{1}{\delta}+\f{1}{V_1}\sum_{i=1}^m D_\ff(x^*\|x_{i,1})+\sum_{t=1}^S\f{\aaa_t}{V_t}\sum_{i=1}^mL\|x_{\ell,t}-x_{i,t}\|^2\\
	&&\quad+\sum_{t=1}^S\f{\aaa_t}{V_t}\sum_{i=1}^m\|\nabla f_i(x_{i,t})\|\|x_{i,t}-x_{\ell,t}\|\\
	&&\quad+\sum_{t=1}^S\f{\aaa_t}{V_t}\sum_{i=1}^m\left\nn\ttt_{i,t}^b,x^*-x_{i,t}\right\mm+\sum_{t=1}^S\f{\aaa_t^2}{V_t}\sum_{i=1}^m\|\ttt_{i,t}^b\|^2\\
	&&\quad +\sum_{t=1}^S\left(\f{\aaa_t^2}{V_t}+3m\aaa_t^2+\f{6\aaa_t^4\lambda_t^2m}{V_t^2}\right)\sum_{i=1}^m\mbb E\Big[\|\ttt_{i,t}^u\|^2|\huaF_{t-1}\Big].
	\ees
\end{pro}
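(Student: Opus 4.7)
The plan is to combine Proposition~\ref{inner_product_est1} with an exponential‐supermartingale argument built from Lemma~\ref{random_ine_lemma}, after rewriting the clipped gradient in terms of its bias/noise decomposition $\ww\nnn_{\lambda_t}f_i(x_{i,t})=\nnn f_i(x_{i,t})+\ttt_{i,t}^b+\ttt_{i,t}^u$. First, I substitute this decomposition into the inner-product inequality from Proposition~\ref{inner_product_est1}. By convexity of $f_i$, the term $\nn\nnn f_i(x_{i,t}),x_{i,t}-x^*\mm$ is bounded below by $f_i(x_{i,t})-f_i(x^*)$, and then $L$-smoothness (Assumption~\ref{Lsmooth_condition}) lets me swap $x_{i,t}$ for $x_{\ell,t}$ via
$$f_i(x_{i,t})\geq f_i(x_{\ell,t})-\|\nnn f_i(x_{i,t})\|\|x_{i,t}-x_{\ell,t}\|-\tfrac{L}{2}\|x_{\ell,t}-x_{i,t}\|^{2}.$$
Summing over $i\in[m]$ introduces $f(x_{\ell,t})-f(x^*)$ on the one side and the two consensus‐error terms (with factors $L\|x_{\ell,t}-x_{i,t}\|^{2}$ and $\|\nnn f_i\|\|x_{i,t}-x_{\ell,t}\|$) on the other. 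The bias inner product $\nn\ttt_{i,t}^b,x_{i,t}-x^*\mm$ rearranges into the $\nn\ttt_{i,t}^b,x^*-x_{i,t}\mm$ summand in the claim.

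Next, I divide the single-step inequality by $V_t$ and sum over $t=1,\ldots,S$. The key structural point is that $V_t$ is $\huaF_{t-1}$-measurable (the iterates $x_{i,t}$ and hence the Bregman divergences $D_\ff(x^*\|x_{i,s})$ for $s\le t$ are determined by stochastic gradients up to step $t-1$) and monotone non-decreasing by construction. A summation-by-parts argument on $\sum_{t=1}^{S}\tfrac{1}{V_t}\sum_i(D_\ff(x^*\|x_{i,t})-D_\ff(x^*\|x_{i,t+1}))$, using $D_\ff(x^*\|x_{i,t+1})\geq 0$ and monotonicity of $V_t$, yields $\tfrac{1}{V_1}\sum_i D_\ff(x^*\|x_{i,1})-\tfrac{1}{V_S}\sum_i D_\ff(x^*\|x_{i,S+1})$, exactly producing the deterministic boundary terms in the statement once the last piece is moved to the LHS.

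The high‐probability $\log(1/\delta)$ factor is produced by an exponential‐supermartingale bound on the noise martingale $\xi_t:=\tfrac{\aaa_t}{V_t}\sum_i\nn\ttt_{i,t}^u,x^*-x_{i,t}\mm$. Using $\|\ttt_{i,t}^u\|\leq 2\lambda_t$ from Lemma~\ref{bound_gradient_lemma}, the one-strong convexity bound $\|x^*-x_{i,t}\|\leq\sqrt{2D_\ff(x^*\|x_{i,t})}$, and the defining inequality $V_t\geq 2m\sqrt{2D_\ff(x^*\|x_{i,t})}$, one obtains the uniform bound $|\xi_t|\leq\aaa_t\lambda_t\leq 1$. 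Lemma~\ref{random_ine_lemma} applied with $\lambda=1$ gives $\mbb E[e^{\xi_t}\mid\huaF_{t-1}]\leq\exp(\tfrac{3}{4}\mbb E[\xi_t^{2}\mid\huaF_{t-1}])$; the resulting process $\exp(\sum_{t\le S}\xi_t-\sum_{t\le S}\tfrac{3}{4}\mbb E[\xi_t^{2}\mid\huaF_{t-1}])$ is a supermartingale of unit expectation, and Markov's inequality gives $\sum_{t\le S}\xi_t\leq\tfrac{3}{4}\sum_{t\le S}\mbb E[\xi_t^{2}\mid\huaF_{t-1}]+\log(1/\delta)$ with probability at least $1-\delta$. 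Bounding $\mbb E[\xi_t^{2}\mid\huaF_{t-1}]$ by Cauchy–Schwarz and the same $V_t$-bound on $\|x^*-x_{i,t}\|$ produces a term of the form $\tfrac{\aaa_t^{2}}{V_t}\sum_i\mbb E[\|\ttt_{i,t}^u\|^{2}\mid\huaF_{t-1}]$ that matches the first coefficient in the last line of the claim.

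Finally, the quadratic term $\tfrac{\aaa_t^{2}}{2V_t}\sum_i\|\ww\nnn_{\lambda_t}f_i\|^{2}$ from Proposition~\ref{inner_product_est1} is expanded via $\ww\nnn_{\lambda_t}f_i=(\nnn f_i+\ttt_{i,t}^b)+\ttt_{i,t}^u$ and the centering $\|\ttt_{i,t}^u\|^{2}=\mbb E[\|\ttt_{i,t}^u\|^{2}\mid\huaF_{t-1}]+(\|\ttt_{i,t}^u\|^{2}-\mbb E[\|\ttt_{i,t}^u\|^{2}\mid\huaF_{t-1}])$. The deterministic part contributes the $\|\ttt_{i,t}^b\|^{2}$ and $\mbb E[\|\ttt_{i,t}^u\|^{2}\mid\huaF_{t-1}]$ summands; the centered fluctuation of $\|\ttt_{i,t}^u\|^{2}$ (which is bounded by $4\lambda_t^{2}$) forms a second martingale, and a second application of Lemma~\ref{random_ine_lemma} absorbs it into the term with coefficient $\tfrac{6\aaa_t^{4}\lambda_t^{2}m}{V_t^{2}}$, together with the residual $3m\aaa_t^{2}$ piece obtained from the Cauchy–Schwarz expansion of $\|\nnn f_i+\ttt_{i,t}^b+\ttt_{i,t}^u\|^{2}$ across the $m$ agents. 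Combining the two high‐probability events by a union bound and collecting the Bregman, consensus, bias, and noise terms yields the stated inequality.

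The main obstacle is the measurability bookkeeping: to preserve the martingale structure after dividing by $V_t$, the denominator must be $\huaF_{t-1}$-measurable, which is exactly why $V_t$ is defined with the running maximum over $s\leq t$ rather than over the whole horizon $s\leq S$ (so that it remains previsible while still dominating every $\sqrt{2D_\ff(x^*\|x_{i,t})}$); coupled with the constraint $\aaa_t\lambda_t\leq 1$, this is what makes the choice $\lambda=1$ in Lemma~\ref{random_ine_lemma} admissible and keeps the denominator from being inflated into the martingale variance.
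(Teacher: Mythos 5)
Your proposal follows essentially the same route as the paper: start from Proposition~\ref{inner_product_est1}, decompose the clipped gradient into $\nabla f_i+\ttt_{i,t}^b+\ttt_{i,t}^u$, swap $x_{i,t}$ for $x_{\ell,t}$ via convexity and smoothness, divide by the previsible monotone weight $V_t$, telescope the Bregman terms by summation by parts, and control the noise martingales through the exponential moment bound of Lemma~\ref{random_ine_lemma}, with $A\geq 8m$ and $\aaa_t\lambda_t\leq1$ guaranteeing the boundedness needed to apply that lemma. Your observation about the $\huaF_{t-1}$-measurability of $V_t$ is exactly the point the paper relies on implicitly.

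One step needs repair. The proposition asserts a single event of probability at least $1-\delta$ on which the inequality holds \emph{simultaneously for all} $S\in[T]$. Applying Markov's inequality to $\exp\bigl(\sum_{t\le S}\xi_t-\tfrac{3}{4}\sum_{t\le S}\mbb E[\xi_t^2\mid\huaF_{t-1}]\bigr)$ at a fixed $S$, as you propose, only yields the bound for that particular $S$; a union bound over $S\in[T]$ would then cost a factor $T$ inside the logarithm. The paper instead invokes Ville's maximal inequality for the nonnegative supermartingale $\{\exp W_S\}$, which gives $\mbb P(\sup_S W_S\ge\log\tfrac{1}{\delta})\le\delta$ directly. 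Since you have already built exactly the right supermartingale, this is a one-line fix, but as written the uniformity in $S$ is not delivered. Relatedly, the paper folds the linear martingale $\xi_t$ and the centered fluctuation of $\|\ttt_{i,t}^u\|^2$ into a single increment $Z_t$ and applies Lemma~\ref{random_ine_lemma} once to their sum, whereas your two-supermartingale union bound yields probability $1-2\delta$; this is cosmetic, but the single-$Z_t$ formulation avoids both the doubling and a second appeal to Ville. Finally, your attribution of the $3m\aaa_t^2$ coefficient to the Cauchy--Schwarz expansion of $\|\nabla f_i+\ttt_{i,t}^b+\ttt_{i,t}^u\|^2$ differs from the paper, where that coefficient arises from the conditional variance $\mbb E[\xi_t^2\mid\huaF_{t-1}]$ of the linear noise martingale (and the $\aaa_t^2/V_t$ piece from the centered quadratic term); the stated coefficients are generous enough that both bookkeepings land under them.
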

\begin{proof}
Recalling the concept $\ttt_{i,t}=\ww \nabla_{\lambda_t} f_i(x_{i,t})-\nabla f_i(x_{i,t})$, we know
$\ww \nabla_{\lambda_t} f_i(x_{i,t})=\ttt_{i,t}+\nabla f_i(x_{i,t}).$
Substituting this relation to Proposition \ref{inner_product_est1} and dividing both sides of the inequality by  $V_t$, we have
\bes
\nono&&\f{\aaa_t}{V_t}\sum_{i=1}^m\left\nn\nabla f_i(x_{i,t}),x_{i,t}-x^*\right\mm\\
&&\leq \sum_{i=1}^m\f{1}{V_t}\Big[D_{\ff}(x^*\|x_{i,t})-D_{\ff}(x^*\|x_{i,t+1})\Big]\\
&&\quad+\f{\aaa_t}{V_t}\sum_{i=1}^m\nn\ttt_{i,t},x^*-x_{i,t}\mm+\f{\aaa_t^2}{2V_t}\sum_{i=1}^m\left\|\ww\nabla_{\lambda_t} f_i(x_{i,t})\right\|^2.
\ees
Then based on the basic relation $\ttt_{i,t}=\ttt_{i,t}^u+\ttt_{i,t}^b$, we can further bound the above inequality as
\bes
\nono&&\f{\aaa_t}{V_t}\sum_{i=1}^m\left\nn\nabla f_i(x_{i,t}),x_{i,t}-x^*\right\mm\\
&&\leq \f{1}{V_t}\sum_{i=1}^m\Big[D_{\ff}(x^*\|x_{i,t})-D_{\ff}(x^*\|x_{i,t+1})\Big]\\
&&\quad+\f{\aaa_t}{V_t}\sum_{i=1}^m\nn\ttt_{i,t}^u,x^*-x_{i,t}\mm+\f{\aaa_t}{V_t}\sum_{i=1}^m\nn\ttt_{i,t}^b,x^*-x_{i,t}\mm\\
 &&\quad+\f{\aaa_t^2}{V_t}\sum_{i=1}^m\left[\|\ttt_{i,t}^u\|^2-\mbb E\Big[\|\ttt_{i,t}^u\|^2|\huaF_{t-1}\Big]\right]\\
 &&\quad+\f{\aaa_t^2}{V_t}\sum_{i=1}^m\mbb E\Big[\|\ttt_{i,t}^u\|^2|\huaF_{t-1}\Big]+\f{\aaa_t^2}{V_t}\sum_{i=1}^m\|\ttt_{i,t}^b\|^2.
\ees
We now set a random variable 
\bes
\beal
Z_t=&\f{\aaa_t}{V_t}\sum_{i=1}^m\left\nn\nabla f_i(x_{i,t}),x_{i,t}-x^*\right\mm\\
&-\f{1}{V_t}\sum_{i=1}^m\Big[D_{\ff}(x^*\|x_{i,t})-D_{\ff}(x^*\|x_{i,t+1})\Big]\\
&-\f{\aaa_t}{V_t}\sum_{i=1}^m\nn\ttt_{i,t}^b,x^*-x_{i,t}\mm-\f{\aaa_t^2}{V_t}\sum_{i=1}^m\|\ttt_{i,t}^b\|^2\\
&-\left(\f{\aaa_t^2}{V_t}+3m\aaa_t^2+\f{6\aaa_t^4\lambda_t^2m}{V_t^2}\right)\sum_{i=1}^m\mbb E\Big[\|\ttt_{i,t}^u\|^2|\huaF_{t-1}\Big].
\eeal
\ees
Then it holds that
\bea
\nono&&\hspace{-0.7cm}\mbb E\Big[\exp Z_t\Big|\huaF_{t-1}\Big]\\
\nono&&\hspace{-0.7cm}\times\exp\left(\left(3m\aaa_t^2+\f{6\aaa_t^4\lambda_t^2m}{V_t^2}\right)\sum_{i=1}^m\mbb E\Big[\|\ttt_{i,t}^u\|^2|\huaF_{t-1}\Big]\right)\\
\nono&&\hspace{-0.7cm}\leq\mbb E\Bigg[\exp\Bigg(\f{\aaa_t}{V_t}\sum_{i=1}^m\nn\ttt_{i,t}^u,x^*-x_{i,t}\mm\\
&&\hspace{-0.7cm}\quad+\f{\aaa_t^2}{V_t}\sum_{i=1}^m\left[\|\ttt_{i,t}^u\|^2-\mbb E\Big[\|\ttt_{i,t}^u\|^2|\huaF_{t-1}\Big]\right]\Bigg)\Bigg|\huaF_{t-1}\Bigg]. \label{eq1}
\eea
For the above two terms,  we know that 
\bes
&&\mbb E\left[\sum_{i=1}^m\nn\ttt_{i,t}^u,x^*-x_{i,t}\mm\right]\\
&&=\mbb E\left[\sum_{i=1}^m\left[\|\ttt_{i,t}^u\|^2-\mbb E\Big[\|\ttt_{i,t}^u\|^2|\huaF_{t-1}\Big]\right]\right]=0.
\ees
Recall that the 1-strong convexity of the mirror map $\ff$ indicates that $D_\ff(x^*\|x_{i,t})\geq\f{1}{2}\|x^*-x_{i,t}\|^2$. This fact together with Lemma \ref{bound_gradient_lemma} yields 
\bes
&&\left|\f{\aaa_t}{V_t}\sum_{i=1}^m\nn\ttt_{i,t}^u,x^*-x_{i,t}\mm\right|\leq\f{2\aaa_t\lambda_t}{V_t}\sum_{i=1}^m\|x^*-x_{i,t}\|\\
&&\leq\f{2\aaa_t\lambda_t}{V_t}\sum_{i=1}^m\sqrt{2D_\ff(x^*\|x_{i,t})}
\ees
and
\bes
\f{\aaa_t^2}{V_t}\sum_{i=1}^m\left[\|\ttt_{i,t}^u\|^2-\mbb E\Big[\|\ttt_{i,t}^u\|^2|\huaF_{t-1}\Big]\right]\leq\f{8m\aaa_t^2\lambda_t^2}{V_t}.
\ees
  Recalling the definition of $V_t$ with $A\geq8m$ and the condition $\aaa_t\lambda_t\leq1$, after applying Lemma \ref{random_ine_lemma} with $M=1$, we know 
\bes
&&\mbb E\Bigg[\exp\Bigg(\f{\aaa_t}{V_t}\sum_{i=1}^m\nn\ttt_{i,t}^u,x^*-x_{i,t}\mm\\
&&+\f{\aaa_t^2}{V_t}\sum_{i=1}^m\left[\|\ttt_{i,t}^u\|^2-\mbb E\Big[\|\ttt_{i,t}^u\|^2|\huaF_{t-1}\Big]\right]\Bigg)\Bigg|\huaF_{t-1}\Bigg]\\
&&\leq\exp\Bigg(\f{3}{4}\mbb E\Bigg[\bigg(\f{\aaa_t}{V_t}\sum_{i=1}^m\nn\ttt_{i,t}^u,x^*-x_{i,t}\mm\\
&&\quad+\f{\aaa_t^2}{V_t}\sum_{i=1}^m\left[\|\ttt_{i,t}^u\|^2-\mbb E\Big[\|\ttt_{i,t}^u\|^2|\huaF_{t-1}\Big]\right]\bigg)^2\Bigg|\huaF_{t-1}\Bigg]\Bigg)\\
&&\leq\exp\Bigg(\f{3}{4}\mbb E\Bigg[2\bigg(\f{\aaa_t}{V_t}\sum_{i=1}^m\nn\ttt_{i,t}^u,x^*-x_{i,t}\mm\bigg)^2\\
&&\quad+2\bigg(\f{\aaa_t^2}{V_t}\sum_{i=1}^m\left[\|\ttt_{i,t}^u\|^2-\mbb E\Big[\|\ttt_{i,t}^u\|^2|\huaF_{t-1}\Big]\right]\bigg)^2\Bigg|\huaF_{t-1}\Bigg]\Bigg).
\ees
Due to the fact that $V_t=\max_{i\in[m], 1\leq s\leq t}\left\{\sqrt{2D_\ff(x^*\|x_{i,s})}\right\}+A$ for some  $A>0$, and the strong convexity of the mirror map $\ff$ indicating that $D_\ff(x^*\|x_{i,t})\geq\f{1}{2}\|x^*-x_{i,t}\|^2$, we know that the above inequality can be bounded by
\bes
&&\exp\bigg(3m\aaa_t^2\sum_{i=1}^m\mbb E\Big[\|\ttt_{i,t}^u\|^2|\huaF_{t-1}\Big]\\
&&+\f{3\aaa_t^4m}{2\sss_\ff V_t^2}\sum_{i=1}^m\mbb E\Big[\|\ttt_{i,t}^u\|^4|\huaF_{t-1}\Big]\bigg)
\ees
which can be further bounded by 
\bes
\exp\left(\left(3m\aaa_t^2+\f{6\aaa_t^4\lambda_t^2m}{V_t^2}\right)\sum_{i=1}^m\mbb E\Big[\|\ttt_{i,t}^u\|^2|\huaF_{t-1}\Big]\right)
\ees
as a result of Lemma \ref{random_ine_lemma}. Then we have
$\mbb E\big[\exp Z_t\big|\huaF_{t-1}\big]\leq1$. Define $W_S=\sum_{t=1}^SZ_t$, then it follows that
\bes
\mbb E\Big[\exp W_S\Big|\huaF_{S-1}\Big]=(\exp W_{S-1})\mbb E\Big[\exp Z_S\Big|\huaF_{S-1}\Big]\leq\exp W_{S-1}.
\ees
Then we know $\{\exp W_{S}\}$ is a supermartingale. The Ville's inequality indicates that, for any $S\geq1$, $\mbb P(W_s\geq\log \f{1}{\delta})\leq\delta\mbb E[\exp W_1]$. Namely, with probability at least $1-\delta$, for any $S\geq1$, 
$$\sum_{t=1}^SZ_t\leq\log\f{1}{\delta}.$$
Then based on the above analysis, it holds that, with probability at least $1-\delta$,  for any $S\in[T]$, 
\bes
\hspace{-0.7cm}&&\sum_{t=1}^S\f{\aaa_t}{V_t}\sum_{i=1}^m\left\nn\nabla f_i(x_{i,t}),x_{i,t}-x^*\right\mm\\
\hspace{-0.7cm}&&\leq\log\f{1}{\delta}+\sum_{t=1}^S\f{1}{V_t}\sum_{i=1}^m\Big[D_{\ff}(x^*\|x_{i,t})-D_{\ff}(x^*\|x_{i,t+1})\Big]\\
\hspace{-0.7cm}&&\quad+\sum_{t=1}^S\f{\aaa_t}{V_t}\sum_{i=1}^m\nn\ttt_{i,t}^b,x^*-x_{i,t}\mm+\sum_{t=1}^S\f{\aaa_t^2}{V_t}\sum_{i=1}^m\|\ttt_{i,t}^b\|^2\\
\hspace{-0.7cm}&&\quad +\sum_{t=1}^S\left(\f{\aaa_t^2}{V_t}+3m\aaa_t^2+\f{6\aaa_t^4\lambda_t^2m}{V_t^2}\right)\sum_{i=1}^m\mbb E\Big[\|\ttt_{i,t}^u\|^2|\huaF_{t-1}\Big].
\ees
Note that
\bes
&&\sum_{t=1}^S\f{1}{V_t}\sum_{i=1}^m\Big[D_{\ff}(x^*\|x_{i,t})-D_{\ff}(x^*\|x_{i,t+1})\Big]\\
&&=\f{1}{V_1}\sum_{i=1}^m D_\ff(x^*\|x_{i,1})-\f{1}{V_S}\sum_{i=1}^mD_\ff(x^*\|x_{i,S+1})\\
&&\quad+\sum_{t=2}^S\left(\f{1}{V_t}-\f{1}{V_{t-1}}\right)\sum_{i=1}^mD_\ff(x^*\|x_{i,t+1}).
\ees
Since the sequence $\{V_t\}$ is non-decreasing, we know
$\f{1}{V_t}-\f{1}{V_{t-1}}\leq0$ for $t\geq2$. Then it follows that
\bes
&&\sum_{t=1}^S\f{1}{V_t}\sum_{i=1}^m\Big[D_{\ff}(x^*\|x_{i,t})-D_{\ff}(x^*\|x_{i,t+1})\Big]\\
&&\leq\f{1}{V_1}\sum_{i=1}^m D_\ff(x^*\|x_{i,1})-\f{1}{V_S}\sum_{i=1}^mD_\ff(x^*\|x_{i,S+1}).
\ees
On the other hand,  according to the convexity of $f_i$, we know that, for any $\ell\in[m]$,
\bea
\nono&&\hspace{-0.7cm}\left\nn\nabla f_i(x_{i,t}),x_{i,t}-x^*\right\mm\geq  f_i(x_{i,t})-f_i(x^*)\\
\nono&&\hspace{-0.7cm}=\Big[ f_i(x_{i,t})-f_i(x_{\ell,t})\Big]+\Big[ f_i(x_{\ell,t})-f_i(x^*)\Big]\\
&&\hspace{-0.7cm}\geq-\|\nabla f_i(x_{\ell,t})\|\|x_{i,t}-x_{\ell,t}\|+\Big[ f_i(x_{\ell,t})-f_i(x^*)\Big]. \label{inner_product_est}
\eea
According to the $L$-smoothness of $f_i$, we know
\bea
\beal
\|\nabla f_i(x_{\ell,t})\|\leq& \|\nabla f_i(x_{\ell,t})-\nabla f_i(x_{i,t})\|+\|\nabla f_i(x_{i,t})\|\\
\leq&L\|x_{\ell,t}-x_{i,t}\|+\|\nabla f_i(x_{i,t})\|. \label{Lsmooth_step}
\eeal
\eea
Therefore we have
\bes
&&\left\nn\nabla f_i(x_{i,t}),x_{i,t}-x^*\right\mm\geq-L\|x_{\ell,t}-x_{i,t}\|^2\\
&&-\|\nabla f_i(x_{i,t})\|\|x_{i,t}-x_{\ell,t}\|+\Big[ f_i(x_{\ell,t})-f_i(x^*)\Big].
\ees
Then it follows that, for any $\ell\in[m]$,  with probability at least $1-\delta$, for any $S\in[T]$, 
\bes
&&\sum_{t=1}^S\f{\aaa_t}{V_t}\sum_{i=1}^m\Big[ f_i(x_{\ell,t})-f_i(x^*)\Big]+\f{1}{V_S}\sum_{i=1}^mD_\ff(x^*\|x_{i,S+1})\\
&&\leq\log\f{1}{\delta}+\f{1}{V_1}\sum_{i=1}^m D_\ff(x^*\|x_{i,1})+\sum_{t=1}^S\f{\aaa_t}{V_t}\sum_{i=1}^mL\|x_{\ell,t}-x_{i,t}\|^2\\
&&\quad+\sum_{t=1}^S\f{\aaa_t}{V_t}\sum_{i=1}^m\|\nabla f_i(x_{i,t})\|\|x_{i,t}-x_{\ell,t}\|\\
&&\quad+\sum_{t=1}^S\f{\aaa_t}{V_t}\sum_{i=1}^m\left\nn\ttt_{i,t}^b,x^*-x_{i,t}\right\mm+\sum_{t=1}^S\f{\aaa_t^2}{V_t}\sum_{i=1}^m\|\ttt_{i,t}^b\|^2\\
&&\quad +\sum_{t=1}^S\left(\f{\aaa_t^2}{V_t}+3m\aaa_t^2+\f{6\aaa_t^4\lambda_t^2m}{V_t^2}\right)\sum_{i=1}^m\mbb E\Big[\|\ttt_{i,t}^u\|^2|\huaF_{t-1}\Big].
\ees
We complete the proof after noting that $f=\sum_{i\in[m]}f_i$.
\end{proof}

 For the sake of analysis, we assume that we have access to two problem parameters. One is the maximum of the initial Bregman distance of the local state variables to the optimum   given as $R_1=\max_{i\in[m]}\{\sqrt{2D_\ff(x^*\|x_{i,1})}\}$. In theory and practice, we only need to estimate a rough upper bound for $R_1$ (see e.g. \cite{gsdhgdgr2024}), for which we still denote this rough upper bound as $R_1$. In many practical problems, especially those involving bounded decision regions with known diameters, such as Euclidean ball, Euclidean cube or probability simplex, the value of $R_1$
can be easily estimated.
We also denote the maximum of the local initial gradient estimate of the local objective functions at the initial point by
$B=\max_{i\in[m]}\{\|\nabla f_i(x_{i,1})\|\}$.
As pointed out in \cite{nnen2023}, \cite{gsdhgdgr2024}, there are already mature techniques such as the tools in \cite{minsker2015} for estimating the initial gradient via stochastic gradient samples and their geometric median. Thus the consideration of problem parameter $B$ is meaningful in the existing works and subsequent theoretical analysis.

In the following, we use $E(\delta)$ to denote the event that, for any $S\in[T]$,  the inequality in Proposition \ref{pro3} holds.  Now we are ready to present the following main result. The result is the core foundation of the high probability convergence theory developed in this paper. It has provided the inductive proof framework for the subsequent convergence proofs of Clipped FedSMD.
\begin{thm}\label{thm1}
Suppose that Assumptions \ref{stochastic_gradient}-\ref{random_ine_lemma} hold and the event $E(\delta)$ happens.  For any fixed $0<\delta<1$, suppose that  $A$ given in Proposition \ref{pro3} takes the form 
\bes
\beal
A=&\log\f{1}{\delta}+mL\huaP^2 C_0+2m\huaP C_1+4\sigma^pC_2\\
&+16m\sigma^{2p}C_3+40m\sss^p(1+3m)C_4+40m^2\sss^pC_5+8m,
\eeal
\ees
and the stepsize $\aaa_t$ and the clipping parameter $\lambda_t$ satisfy 
\bes
&&\hspace{-0.8cm}\aaa_t=\f{1}{(1+\log t)^{\gamma}t^{\kkk-\mu}}\min\left\{\f{1}{t^\mu},[2mL(2R_1+10A)+2B]^{-1}\right\}\\ 
&&\hspace{-0.8cm}\lambda_t=\max\left\{t^\mu, 2mL(2R_1+10A)+2B\right\} 
\ees
with the parameters $0<\kkk<1$ and $0<\mu<1$ satisfying
\bes
\kkk\geq\max\{\mu+\f{1}{2},1-\mu(p-1)\},  
\ees
$\gamma>1$ being a parameter larger than $1$ and the finite constants $C_0$-$C_5$ given as
$C_0=\sum_{t=1}^\infty\aaa_t\aaa_{\tau(t)}^2\lambda_{\tau(t)}^2$, $C_1=\sum_{t=1}^\infty\aaa_t\aaa_{\tau(t)}\lambda_t\lambda_{\tau(t)}$, $C_2=\sum_{t=1}^\infty\aaa_t\lambda_t^{1-p}$,
$C_3=\sum_{t=1}^\infty\aaa_t^2\lambda_t^{2-2p}$, $C_4=\sum_{t=1}^\infty(\aaa_t\lambda_t)^2(\f{1}{\lambda_t})^p$,
$C_5=\sum_{t=1}^\infty(\aaa_t\lambda_t)^4(\f{1}{\lambda_t})^p$.
If for some $K\in[T]$,   there holds that  $$\|\nabla f_i(x_{i,t})\|\leq\f{\lambda_t}{2}, i\in[m], 1\leq t\leq K-1,$$ then for any $\ell\in[m]$ and $1\leq S\leq K$, it holds that
	\bes
	&&\sum_{t=1}^S\aaa_t\Big[ f(x_{\ell,t})-f(x^*)\Big]+\sum_{i=1}^mD_\ff(x^*\|x_{i,S+1})\\
	&&\leq m^2(R_1+10A)^2.
	\ees
\end{thm}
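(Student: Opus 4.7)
The plan is to argue by strong induction on $S$, with the target inequality simultaneously serving as the control on the non-decreasing sequence $V_t$ from Proposition \ref{pro3} and as the statement to be recovered at step $S$. The base case is clear from the definition $R_1 = \max_i \sqrt{2 D_\ff(x^* \| x_{i,1})}$. For the inductive step, suppose the claim holds up to $S-1$; then $D_\ff(x^*\|x_{i,s}) \leq m^2(R_1+10A)^2$ for every $i$ and every $s \leq S$, whence $\sqrt{2 D_\ff(x^* \| x_{i,s})} \leq \sqrt{2}\,m(R_1 + 10A)$, and consequently $V_S \leq 2\sqrt{2}\,m^2 (R_1 + 10A) + A$.

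The core of the argument is to bound, under the event $E(\delta)$ and the local-gradient hypothesis $\|\nabla f_i(x_{i,t})\| \leq \lambda_t/2$, each of the six summation terms on the right-hand side of Proposition \ref{pro3} by the corresponding constant piece of $A$. For the two consensus-related terms I would invoke Proposition \ref{basic_consensus} and the triangle inequality to get $\|x_{\ell,t}-x_{i,t}\| \leq 4\sum_{s=\tau(t)}^{t-1}\aaa_s \lambda_s$, then use $t - \tau(t) \leq \p$, the hypothesis $\|\nabla f_i(x_{i,t})\| \leq \lambda_t/2$, and the absorption $V_t^{-1} \leq A^{-1} \leq (8m)^{-1}$ to produce the contributions $mL\p^2 C_0$ and $2m\p C_1$. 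The structural heart of the proof is the bias inner product $\sum_t \f{\aaa_t}{V_t}\sum_i \nn\ttt_{i,t}^b, x^* - x_{i,t}\mm$: combining Cauchy--Schwarz with $\|x^*-x_{i,t}\| \leq \sqrt{2 D_\ff(x^*\|x_{i,t})}$ and the defining property $\sqrt{2 D_\ff(x^*\|x_{i,t})}/V_t \leq 1/(2m)$ reveals exactly why $V_t$ was introduced in this particular form, and Lemma \ref{bound_gradient_lemma} then yields a contribution of $4\sigma^p C_2$. The remaining three terms are the squared-bias and the two variance terms, and are handled by applying Lemma \ref{bound_gradient_lemma} directly to produce $16m\sigma^{2p} C_3$, $40m\sigma^p(1+3m) C_4$, and $40m^2\sigma^p C_5$.

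I would next verify finiteness of $C_0, \ldots, C_5$ by substituting $\aaa_t \asymp t^{-\kkk}(\log t)^{-\gamma}$ and $\lambda_t \asymp t^\mu$: each $C_i$ reduces to a tail series of the form $\sum t^{-a}(\log t)^{-\gamma}$ with $a \geq 1$, the condition $\kkk \geq \mu + 1/2$ controlling $C_0$ and $C_3$, while $\kkk \geq 1 - \mu(p-1)$ controls $C_2$, $C_4$, and $C_5$; convergence at the boundary $a = 1$ is exactly the reason $\gamma > 1$ is imposed. Combining the six bounds with the initialization contribution $V_1^{-1}\sum_i D_\ff(x^*\|x_{i,1}) \leq R_1/4$ (using $V_1 \geq 2m R_1$ and $\sum_i D_\ff(x^*\|x_{i,1}) \leq m R_1^2/2$) bounds the left-hand side of Proposition \ref{pro3} by $A + R_1/4$. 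Multiplying through by $V_S$, using the nonnegativity of $f(x_{\ell,t}) - f(x^*)$ and the monotonicity $V_S \geq V_t$ to pull $V_S$ outside the first sum, I obtain $\sum_{t=1}^S \aaa_t[f(x_{\ell,t})-f(x^*)] + \sum_i D_\ff(x^*\|x_{i,S+1}) \leq V_S\,(A + R_1/4)$. Substituting the induction-derived bound on $V_S$, the right-hand side becomes a polynomial in $R_1$ and $A$ whose terms compare to $m^2(R_1+10A)^2 = m^2 R_1^2 + 20\,m^2 R_1 A + 100\,m^2 A^2$ coefficient-by-coefficient, and the inequality closes with room to spare, completing the induction.

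The main obstacle is precisely this final coefficient match: the factor $10A$ in the target bound is chosen so that the leading $m^2 A^2$ contribution $20\sqrt{2}\,m^2 A^2$ arising from $V_S \cdot A$ fits inside the $100\,m^2 A^2$ available on the right, while the $m^2 R_1 A$ and $m^2 R_1^2$ pieces are accommodated via the $R_1/4$ initialization term; any looseness in the six constants defining $A$ would threaten the induction. A secondary technical matter is the monotonicity or uniform-control argument that lets us replace $\sum_{s=\tau(t)}^{t-1}\aaa_s\lambda_s$ by $\p\,\aaa_{\tau(t)}\lambda_{\tau(t)}$ and fold the result into $C_0$ and $C_1$, which uses only the explicit logarithmic-polynomial form of the step size and clipping rules together with the condition $\aaa_t\lambda_t \leq 1$.
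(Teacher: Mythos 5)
Your proposal is correct and mirrors the paper's own argument essentially step for step: the same strong induction on $S$, the same use of the induction hypothesis to bound $V_S$ by $2\sqrt{2}\,m^2(R_1+10A)+A$, the same term-by-term absorption of the six summations of Proposition \ref{pro3} into the constants $C_0$--$C_5$ composing $A$ (with the bias inner product handled exactly as in the paper via $\sqrt{2D_\ff(x^*\|x_{i,t})}/V_t\leq 1/(2m)$), the same convergence check on $C_0$--$C_5$ under $\kkk\geq\max\{\mu+\f{1}{2},1-\mu(p-1)\}$ and $\gamma>1$, and the same final multiplication by $V_S$ with a coefficient match against $m^2(R_1+10A)^2$. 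Your initialization bound $R_1/4$ is in fact slightly sharper than the paper's $R_1/2$, which only helps the closing inequality.
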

\begin{proof}
Under $\|\nabla f_i(x_{i,t})\|\leq\f{\lambda_t}{2}$, $i\in[m]$, according to Lemma \ref{random_ine_lemma},  we know 
\bes
\beal
|\nn\ttt_{i,t}^b,x_{i,t}-x^*\mm|\leq&\|\ttt_{i,t}^b\|\|x_{i,t}-x^*\|\\
\leq&4\sss^p\lambda_t^{1-p}\sqrt{2D_\ff(x^*\|x_{i,t})};\\
\|\ttt_{i,t}^b\|\leq&4\sss^p\lambda_t^{1-p};\\
\mbb E[\|\ttt_{i,t}^u\|^2|\huaF_{t-1}]\leq&40\sigma^p\lambda_t^{2-p}.
\eeal
\ees
Next, we use induction to prove the statement. When $S=0$, it is trivial to see the result holds. Now  suppose that, when $1\leq k\leq S-1$, there holds, 
\bes
&&\sum_{t=1}^{k}\aaa_t\sum_{i=1}^m\Big[ f_i(x_{\ell,t})-f_i(x^*)\Big]+\sum_{i=1}^mD_\ff(x^*\|x_{i,k+1})\\
&&\leq m^2(R_1+10A)^2.
\ees
Then it follows that
\bes
\sum_{i=1}^mD_\ff(x^*\|x_{i,S})\leq m^2(R_1+10A)^2.
\ees
Hence we have
\bea
\max_{i\in[m]}\sqrt{D_\ff(x^*\|x_{i,S})}\leq m(R_1+10A). \label{induction_ass}
\eea
When $k=S$, According to Proposition \ref{pro3}, it holds that, for any $\ell\in[m]$, 
\bes
&&\sum_{t=1}^S\f{\aaa_t}{V_t}\Big[ f(x_{\ell,t})-f(x^*)\Big]+\f{1}{V_S}\sum_{i=1}^mD_\ff(x^*\|x_{i,S+1})\\
&&\leq\log\f{1}{\delta}+\f{1}{V_1}\sum_{i=1}^m D_\ff(x^*\|x_{i,1})+\sum_{t=1}^S\f{\aaa_t}{V_t}\sum_{i=1}^mL\|x_{\ell,t}-x_{i,t}\|^2\\
&&\quad+\sum_{t=1}^S\f{\aaa_t}{V_t}\sum_{i=1}^m\|\nabla f_i(x_{i,t})\|\|x_{i,t}-x_{\ell,t}\|\\
&&\quad+\sum_{t=1}^S\f{\aaa_t}{V_t}\sum_{i=1}^m\left\nn\ttt_{i,t}^b,x^*-x_{i,t}\right\mm+\sum_{t=1}^S\f{\aaa_t^2}{V_t}\sum_{i=1}^m\|\ttt_{i,t}^b\|^2\\
&&\quad +\sum_{t=1}^S\left(\f{\aaa_t^2}{V_t}+3m\aaa_t^2+\f{6\aaa_t^4\lambda_t^2m}{V_t^2}\right)\sum_{i=1}^m\mbb E\Big[\|\ttt_{i,t}^u\|^2|\huaF_{t-1}\Big].
\ees
Note that, according to Proposition \ref{basic_consensus}, we have
\bes
&&\sum_{t=1}^S\f{\aaa_t}{V_t}\sum_{i=1}^mL\|x_{\ell,t}-x_{i,t}\|^2\\
&&\leq \f{mL\huaP^2}{2m R_1+A}\sum_{t=1}^S\aaa_t\aaa_{\tau(t)}^2\lambda_{\tau(t)}^2\leq mL\huaP^2 C_0,
\ees
and
\bea
\nono&&\sum_{t=1}^S\f{\aaa_t}{V_t}\sum_{i=1}^m\|\nabla f_i(x_{i,t})\|\|x_{i,t}-x_{\ell,t}\|\\
\nono&&\leq 2m\sum_{t=1}^S\f{\aaa_t\lambda_t}{V_t}\sum_{s=\tau(t)}^{t-1}\aaa_s\lambda_s\leq \f{2m\huaP}{V_1}\sum_{t=1}^S\aaa_t\aaa_{\tau(t)}\lambda_t\lambda_{\tau(t)}\\
&&\leq\f{2m\huaP}{2m R_1+A}C_1.  \label{C_1bdd}
\eea
Meanwhile, the Schwarz inequality and the strong convexity of $\ff$ indicate that
\bes
&&\sum_{t=1}^S\f{\aaa_t}{V_t}\sum_{i=1}^m\left|\left\nn\ttt_{i,t}^b,x^*-x_{i,t}\right\mm\right|\leq\sum_{t=1}^S\f{\aaa_t}{V_t}\sum_{i=1}^m\|\ttt_{i,t}^b\|\|x_{i,t}-x^*\|\\
&&\leq4\sum_{t=1}^S\f{\aaa_t}{V_t}\sum_{i=1}^m\sss^p\lambda_t^{1-p}\sqrt{2D_\ff(x^*\|x_{i,t})}\\
&&\leq4\sss^p\sum_{t=1}^S\aaa_t\lambda_t^{1-p}\leq4\sss^pC_2.
\ees
Also we have
\bes
\sum_{t=1}^S\f{\aaa_t^2}{V_t}\sum_{i=1}^m\|\ttt_{i,t}^b\|^2\leq\f{16m\sss^{2p}}{R_1+A}\sum_{t=1}^S\aaa_t^2\lambda_t^{2-2p}\leq\f{16m\sss^{2p}}{2mR_1+A}C_3.
\ees
We also note that
\bes
&&\sum_{t=1}^S\left(\f{\aaa_t^2}{V_t}+3m\aaa_t^2+\f{6\aaa_t^4\lambda_t^2m}{V_t^2}\right)\sum_{i=1}^m\mbb E\Big[\|\ttt_{i,t}^u\|^2|\huaF_{t-1}\Big]\\
&&\leq40m\sss^p\left(\f{1}{V_1}+3m\right)\sum_{t=1}^S(\aaa_t\lambda_t)^2(\f{1}{\lambda_t})^p\\
&&\quad+\f{40m^2\sss^p}{ V_1^2}\sum_{t=1}^S(\aaa_t\lambda_t)^4(\f{1}{\lambda_t})^p\\
&&\leq40m\sss^p\left(\f{1}{V_1}+3m\right)C_4+\f{40m^2\sss^p}{ V_1^2}C_5.
\ees
The finiteness of the constants $C_0$-$C_5$ is ensured by the conditions on $\kkk$ and $\mu$ of the theorem and basic calculus. We briefly show them as follows. For $C_0$ and $C_1$, it is easy to see $C_0,C_1\leq\sum_{t=1}^\infty\aaa_{\tau(t)}^2\lambda_{\tau(t)}^2$. Then according to the definition of $\tau(t)$, we know
\bes
\beal
\sum_{t=1}^\infty\aaa_{\tau(t)}^2\lambda_{\tau(t)}^2=&\lim_{\huaT\rightarrow\infty}\huaP\sum_{k=0}^\huaT\aaa_{1+k\huaP}^2\lambda_{1+k\huaP}^2\\
=&\huaP\sum_{k=0}^\infty\f{1}{(1+\log (1+k\huaP))^{2\gamma}(1+k\huaP)^{2(\kkk-\mu)}}\\
\leq&\huaP\sum_{k=0}^\infty\f{1}{(1+\log(1+k))^{2\gamma}(1+k)^{2(\kkk-\mu)}}.
\eeal
\ees
Since $\gamma>1$, $\kkk-\mu\geq\f{1}{2}$, we know $C_0<\infty$, and $C_1<\infty$. Because $C_2\geq C_3$ and $C_2\geq C_4\geq C_5$, we only need to verify $C_2<\infty$. Note that
\bes
&&C_2=\sum_{t=1}^\infty\aaa_t\lambda_t^{1-p}=\sum_{t=1}^\infty\f{1}{(1+\log t)^\gamma t^{\kkk-\mu}}\f{1}{\lambda_t^p}\\
&&\leq \sum_{t=1}^\infty\f{1}{(1+\log t)^\ttt t^{\kkk-\mu}}\f{1}{t^{p\mu}}\leq \sum_{t=1}^\infty\f{1}{(1+\log t)^\gamma t^{\kkk+(p-1)\mu}},
\ees
and the selection of $\kkk$ satisfies $\kkk\geq1-\mu(p-1)$, we know $C_2$ is finite.
 
 Combining the above estimates and noticing the selection rule of $A$, we arrive at
\bes
&&\sum_{t=1}^S\f{\aaa_t}{V_t}\Big[ f(x_{\ell,t})-f(x^*)\Big]+\f{1}{V_S}\sum_{i=1}^mD_\ff(x^*\|x_{i,S+1})\\
&&\leq\f{m R_1^2}{2mR_1+A}+\log\f{1}{\delta}+\f{m\huaP}{2mR_1+A}C_1+4\sss^pC_2\\
&&\quad+\f{16m\sss^{2p}}{2mR_1+A}C_3+40m\sss^p\left(\f{1}{V_1}+3m\right)C_4+\f{40m^2\sss^p}{ V_1^2}C_5\\
&&\leq \f{m R_1^2}{2mR_1+A}+A.
\ees
Due to the induction assumption, we have known from \eqref{induction_ass} that $V_S\leq2\sqrt{2}m^2A(R_1+10A)+A$, this together with the non-decreasing property of $\{V_t\}$ yields that
\bes
&&\sum_{t=1}^S\aaa_t\Big[ f(x_{\ell,t})-f(x^*)\Big]+\sum_{i=1}^mD_\ff(x^*\|x_{i,S+1})\\
&&\leq \left(\f{m R_1^2}{2mR_1+A}+A\right)[2\sqrt{2}m^2A(R_1+10A)+A]\\
&&\leq m^2(R_1+10A)^2.
\ees
We complete the proof.
\end{proof}

 For any agent $\ell\in[m]$, we consider the following local ergodic approximating sequence $$\widehat{x}_\ell^{T}:=\f{1}{T}\sum_{t=1}^{T}x_{\ell,t}.$$ Based on the results in Theorem \ref{thm1}, the next theorem provides a general convergence bound for Clipped FedSMD.
\begin{thm}\label{thm2}
Under Assumptions \ref{stochastic_gradient}-\ref{random_ine_lemma},  for any fixed $0<\delta<1$, if the stepsize $\aaa_t$ and the clipping parameter $\lambda_t$ satisfy the conditions of Theorem \ref{thm1}, then for any agent $\ell\in[m]$,  we have, with probability at least $1-\delta$, for any $T\in\mbb N$,
	 \bes
	\beal
	f(\wh x_\ell^T)-f(x^*)\leq\f{1}{T\aaa_T}[m^2(R_1+10A)^2].
	\eeal
	\ees
\end{thm}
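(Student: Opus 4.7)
The plan is to combine Theorem \ref{thm1} with Jensen's inequality and the monotonicity of the stepsize sequence $\{\alpha_t\}$. In one sentence, once Theorem \ref{thm1} can be invoked with $S=T$, the weighted sum bound $\sum_{t=1}^T \alpha_t[f(x_{\ell,t})-f(x^*)]\leq m^2(R_1+10A)^2$ is converted into a bound on $f(\widehat{x}_\ell^T)-f(x^*)$ by pulling the sum through $f$ using convexity and replacing each $\alpha_t$ by the smallest value $\alpha_T$.

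The main non-trivial preliminary is to verify the hypothesis of Theorem \ref{thm1}, namely $\|\nabla f_i(x_{i,t})\|\leq \lambda_t/2$ for every $i\in[m]$ and every $1\leq t\leq T$, which then allows $K=T+1$ in the statement. I would proceed by induction on $t$. For $t=1$ the bound $\|\nabla f_i(x_{i,1})\|\leq B$ is definitional, and $\lambda_1\geq 2B$ by construction. For the inductive step, suppose the gradient bound holds for all $s<t$; then Theorem \ref{thm1} applied with $K=t$ yields $\sum_{i=1}^m D_\Phi(x^*\|x_{i,t})\leq m^2(R_1+10A)^2$, and $1$-strong convexity of $\Phi$ gives $\|x^*-x_{i,t}\|\leq \sqrt{2D_\Phi(x^*\|x_{i,t})}\leq \sqrt{2}\,m(R_1+10A)$. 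Combining with the triangle inequality $\|x_{i,t}-x_{i,1}\|\leq \|x_{i,t}-x^*\|+\|x^*-x_{i,1}\|$, the bound $\|x^*-x_{i,1}\|\leq R_1$, and $L$-smoothness (Assumption \ref{Lsmooth_condition}), we obtain $\|\nabla f_i(x_{i,t})\|\leq L\|x_{i,t}-x_{i,1}\|+B$, which by the explicit choice $\lambda_t=\max\{t^\mu,\, 2mL(2R_1+10A)+2B\}$ is at most $\lambda_t/2$.

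Once the induction is complete, Theorem \ref{thm1} at $S=T$ gives $\sum_{t=1}^T \alpha_t[f(x_{\ell,t})-f(x^*)]\leq m^2(R_1+10A)^2$ after dropping the non-negative $\sum_i D_\Phi(x^*\|x_{i,T+1})$ term. Non-increasingness of $\alpha_t$ follows immediately from the factorization $\alpha_t=(1+\log t)^{-\gamma}\,t^{-(\kappa-\mu)}\min\{t^{-\mu},\, \text{const}^{-1}\}$ together with $\kappa>\mu$, so $\alpha_t\geq \alpha_T$ for $t\leq T$; combined with $f(x_{\ell,t})\geq f(x^*)$ (since $x_{\ell,t}\in\huaX$), this yields $\alpha_T\sum_{t=1}^T [f(x_{\ell,t})-f(x^*)]\leq m^2(R_1+10A)^2$. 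Jensen's inequality applied to the convex $f$ gives $f(\widehat{x}_\ell^T)\leq \frac{1}{T}\sum_{t=1}^T f(x_{\ell,t})$, and dividing by $T\alpha_T$ yields the claim. The high-probability aspect is automatic because the event $E(\delta)$ of Proposition \ref{pro3} already supplies a single uniform-in-$S$ bound of probability at least $1-\delta$, so the entire argument runs on that same event.

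The hard part is really the inductive verification of the gradient bound, together with checking that the numerical constants in $\lambda_t$ are calibrated to absorb the radius $\sqrt{2}\,m(R_1+10A)$ inherited from the previous inductive level. This is where the explicit additive term $2mL(2R_1+10A)+2B$ inside the max defining $\lambda_t$ earns its keep: it is engineered to dominate, up to the factor $1/2$, the Lipschitz-transported bound on $\|\nabla f_i(x_{i,t})\|$ derived from the Bregman radius estimate, thereby closing the induction loop that would otherwise be circular. Everything else in the proof is a routine consequence of Theorem \ref{thm1}.
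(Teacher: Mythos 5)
Your proof follows essentially the same route as the paper's: the same induction establishing $\|\nabla f_i(x_{i,t})\|\le\lambda_t/2$ via Theorem \ref{thm1}, the $1$-strong convexity of $\Phi$, and $L$-smoothness, followed by invoking Theorem \ref{thm1} at $S=T$, dropping the nonnegative Bregman term, using the monotonicity of $\alpha_t$, and applying Jensen's inequality on the event $E(\delta)$. The only (immaterial) difference is that you bound $\|\nabla f_i(x_{i,t})\|$ by routing through $\|x_{i,t}-x_{i,1}\|$ whereas the paper routes through $x^*$ and then $x_{i,1}$ --- both reduce to $L(\|x_{i,t}-x^*\|+\|x^*-x_{i,1}\|)+B$ --- and your version honestly carries the $\sqrt{2}$ factor from the Bregman radius that the paper silently drops, a constant-calibration looseness present in the paper's own proof as well.
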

\begin{proof} 
	We prove by induction that, for all $t$, there holds that
 \bes
 \|\nabla f_i(x_{i,t})\|\leq\f{\lambda_t}{2}, \ i\in[m].
 \ees
 For $t=1$, it is easy to see
 \bes
 \|\nabla f_i(x_{i,1})\|\leq B\leq\f{\lambda_1}{2}, \ i\in[m].
 \ees
Now suppose that for all $t\leq S$, there holds $\|\nabla f_i(x_{i,t})\|\leq\f{\lambda_t}{2}$, $i\in[m]$. We show that
 \bes
 \|\nabla f_i(x_{i,S+1})\|\leq\f{\lambda_{S+1}}{2}, \ i\in[m].
 \ees
 According to Theorem \ref{thm1}, under condition $\|\nabla f_i(x_{i,t})\|\leq\f{\lambda_t}{2}$, it holds that 
 \bes
 \|x_{i,S+1}-x^*\|\leq\sqrt{2D_\ff(x^*\|x_{i,S+1})}\leq m(R_1+10A).
 \ees
 Then it follows from the selection rule of $\lambda_t$ in Theorem \ref{thm1} that, for any $i\in[m]$,
 \bes
\beal
\|\nabla f_i(x_{i,S+1})\|\leq&\|\nabla f_i(x_{i,S+1})-\nabla f_i(x^*)\|\\
&+\|\nabla f_i(x^*)-\nabla f_i(x_{i,1})\|+\|\nabla f_i(x_{i,1})\|\\
\leq&L\|x_{i,S+1}-x^*\|+L\|x^*-x_{i,1}\|+B\\
\leq&mL(R_1+10A)+LR_1+B\leq\f{\lambda_t}{2}.
\eeal
\ees
Therefore, for all $t$, $ \|\nabla f_i(x_{i,t})\|\leq\f{\lambda_t}{2}, \ i\in[m]$. Then according to Theorem \ref{thm1}, we have, for any agent $\ell\in[m]$,  with probability at least $1-\delta$, for any $T\in\mbb N$,
\bes
&&\sum_{t=1}^T\aaa_t\Big[ f(x_{\ell,t})-f(x^*)\Big]+\sum_{i=1}^mD_\ff(x^*\|x_{i,T+1})\\
&&\leq m^2(R_1+10A)^2.
\ees
According to the positivity of $D(x^*\|x_{i,T+1})$, $i\in[m]$ and the fact that $\aaa_t$ is  non-increasing, we have, with probability at least $1-\delta$, for any $T\in\mbb N$,
$$\sum_{t=1}^T [ f(x_{\ell,t})-f(x^*) ]\leq \f{1}{\aaa_T}[m^2(R_1+10A)^2].$$
We finish the proof after dividing both sides of the above inequality by $T$ and utilizing the convexity of $f$.
\end{proof}

 The above theorem has achieved a general high probability convergence bound under the selection of stepsizes and clipping parameters satisfying the relating $\kkk\geq\max\{\mu+\f{1}{2},1-\mu(p-1)\}$. When $(\kkk,\mu)$ corresponds to the minmax pair that satisfies $\kkk\geq\max\{\mu+\f{1}{2},1-\mu(p-1)\}$, namely $(\kkk,\mu)=(\f{p+1}{2p},\f{1}{2p})$, the next result provides the corresponding  explicit high probability convergence rate in terms of the total iteration $T$.

\begin{thm}\label{thm3}
	Under Assumptions \ref{stochastic_gradient}-\ref{random_ine_lemma},  for any fixed $0<\delta<1$, if the stepsize $\aaa_t$ and the clipping parameter $\lambda_t$ satisfy the condition of Theorem \ref{thm1}, and $(\kkk,\mu)=(\f{p+1}{2p},\f{1}{2p})$, 
	%and the total iteration $T\geq [mL(2R_1+10A)+B]^{2p}$,
	 then for any $\ell\in[m]$, we have, with probability at least $1-\delta$, 
		\bes
	f(\wh x_\ell^T)-f(x^*)\leq\huaO\left(T^{\f{1-p}{2p}}\log^{\gamma}T\log^2\left(\f{1}{\delta}\right)\right),
	\ees
	for sufficiently large $T$.
	%provided the total iteration $T\geq [mL(2R_1+10A)+B]^{2p}$.
	\end{thm}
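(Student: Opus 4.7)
The plan is to specialize the generic high-probability bound established in Theorem \ref{thm2} to the particular choice $(\kkk,\mu)=(\f{p+1}{2p},\f{1}{2p})$ and then read off the rate in $T$ by a direct computation of $\f{1}{T\aaa_T}$, while isolating the $\delta$-dependence coming through the constant $A$.

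First I would verify that the pair $(\kkk,\mu)=(\f{p+1}{2p},\f{1}{2p})$ actually lies on the boundary $\kkk=\max\{\mu+\f{1}{2},1-\mu(p-1)\}$. Indeed, $\mu+\f{1}{2}=\f{1}{2p}+\f{1}{2}=\f{p+1}{2p}$ and $1-\mu(p-1)=1-\f{p-1}{2p}=\f{p+1}{2p}$, so both thresholds coincide, which is precisely why this is the optimal pair. Hence Theorem \ref{thm2} applies, yielding
\bes
f(\wh x_\ell^T)-f(x^*)\leq \f{m^2(R_1+10A)^2}{T\aaa_T}.
\ees

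Next I would analyze $\aaa_T$ under this choice. Since $\kkk-\mu=\f{1}{2}$ and $\mu=\f{1}{2p}$, we have
\bes
\aaa_T=\f{1}{(1+\log T)^\gamma T^{1/2}}\min\left\{\f{1}{T^{1/(2p)}},\ [2mL(2R_1+10A)+2B]^{-1}\right\}.
\ees
For all $T$ large enough that $T^{1/(2p)}\geq 2mL(2R_1+10A)+2B$, the first term in the minimum becomes the smaller one, so
\bes
\aaa_T=\f{1}{(1+\log T)^\gamma T^{1/2+1/(2p)}},\qquad\f{1}{T\aaa_T}=(1+\log T)^\gamma T^{(1-p)/(2p)}.
\ees
Plugging this into the bound from Theorem \ref{thm2} gives a leading factor of $T^{(1-p)/(2p)}\log^\gamma T$ multiplied by $(R_1+10A)^2$.

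Finally I would extract the $\log^2(1/\delta)$ factor from $A$. By the formula for $A$ in Theorem \ref{thm1}, the only term depending on $\delta$ is $\log\f{1}{\delta}$; the constants $C_0$--$C_5$ are finite and independent of $\delta$ and $T$ by the conditions on $\kkk,\mu,\gamma$. Thus $A=\log\f{1}{\delta}+\huaO(1)$ and $(R_1+10A)^2=\huaO(\log^2\f{1}{\delta})$. Combining this with the $T$-rate computed above yields
\bes
f(\wh x_\ell^T)-f(x^*)\leq \huaO\!\left(T^{(1-p)/(2p)}\log^\gamma T\,\log^2\f{1}{\delta}\right),
\ees
as claimed. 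The only mildly delicate step is ensuring that the large-$T$ regime where the minimum in the stepsize is attained by $1/T^{\mu}$ is indeed valid for the asymptotic statement; this is harmless because the threshold depends on $R_1,A,B,L,m$ but not on $T$, so it is absorbed into the ``sufficiently large $T$'' hypothesis already in the theorem statement.
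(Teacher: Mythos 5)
Your proposal is correct and follows essentially the same route as the paper: both proofs simply invoke the bound $f(\wh x_\ell^T)-f(x^*)\leq\f{1}{T\aaa_T}[m^2(R_1+10A)^2]$ from Theorem \ref{thm2} and substitute $(\kkk,\mu)=(\f{p+1}{2p},\f{1}{2p})$, with your version spelling out the details (checking that the pair attains both thresholds, computing $\f{1}{T\aaa_T}=(1+\log T)^{\gamma}T^{(1-p)/(2p)}$ once $T^{\mu}$ dominates the constant in the minimum, and extracting $\log^2(1/\delta)$ from $(R_1+10A)^2$) that the paper leaves implicit.
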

\begin{proof}
 After dividing both sides by $T$ and utilizing the convexity of $f$, we have, with probability at least $1-\delta$,
\bes
\beal
f(\wh x_\ell^T)-f(x^*)\leq\f{1}{T\aaa_T}[m^2(R_1+10A)^2].
\eeal
\ees
After directly substituting the values of $A$, $\aaa_t$, $\lambda_t$ with the parameter values $(\kkk,\mu)=(\f{p+1}{2p},\f{1}{2p})$ into Theorem \ref{thm2}, we  obtain the desired high probability convergence rate with sufficiently large $T$.%provided $T\geq [mL(2R_1+10A)+B]^{2p}$.
\end{proof}

Next, we will discuss a commonly used scenario and present the corresponding main results. In distributed optimization, in addition to Assumption \ref{Lsmooth_condition},  another widely considered assumption is the following boundedness condition of the gradient.
\begin{ass}\label{gradient_bdd_ass}
	There exists a constant $G>0$, such that, for any $i\in[m]$, $\|\nabla f_i(x)\|\leq G$, $x\in\huaX$.
\end{ass}
Assumption \ref{gradient_bdd_ass} are easily satisfied in conventional optimization scenarios, such as smooth objective functions defined in common regions like the Euclidean ball and simplex. However, it is worth mentioning that, we still do not require a bounded closed region (hence compact) condition with an explicit diameter of the decision domain as adopted in many existing works (e.g., \cite{yhy2022}, \cite{xzhyx2022}, \cite{qlxc2025}, \cite{ylw2025}) to achieve the corresponding high-probability convergence rate. This has clear theoretical value.

Next, we will provide selection rules for the stepsizes and clipping parameters after replacing Assumption  \ref{Lsmooth_condition} with Assumption \ref{gradient_bdd_ass}, along with the corresponding convergence results of Clipped FedSMD based on our established results. We show that the same convergence rates can also be derived.

\begin{thm}\label{thm4}
	Under Assumptions \ref{stochastic_gradient}, \ref{random_ine_lemma} and \ref{gradient_bdd_ass},  for any fixed $0<\delta<1$, if the stepsize $\aaa_t$ and the clipping parameter $\lambda_t$ satisfy 
	\bes
	&&\hspace{-0.8cm}\aaa_t=\f{1}{(1+\log t)^{\gamma}t^{\kkk-\mu}}\min\left\{\f{1}{t^\mu},[2G]^{-1}\right\}\\ 
	&&\hspace{-0.8cm}\lambda_t=\max\left\{t^\mu, 2G\right\} 
	\ees
	with the parameters $0<\kkk<1$ and $0<\mu<1$ satisfying
	\bes
	\kkk\geq\max\{\mu+\f{1}{2},1-\mu(p-1)\},  
	\ees 
	and the parameter $\gamma$ satisfying $\gamma>1$, then for any agent $\ell\in[m]$,  we have, with probability at least $1-\delta$, for any $T\in\mbb N$,
	\bes
	\beal
	f(\wh x_\ell^T)-f(x^*)\leq\f{1}{T\aaa_T}[m^2(R_1+10A)^2].
	\eeal
	\ees
	Here, $A$ is defined as in Theorem \ref{thm1} with $C_0=0$ and $C_1$-$C_5$ being defined in Theorem \ref{thm1}.
	\end{thm}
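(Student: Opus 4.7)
The proof will follow the same structural blueprint as Theorems \ref{thm1} and \ref{thm2}, but Assumption \ref{gradient_bdd_ass} allows two decisive simplifications. First, the induction hypothesis $\|\nabla f_i(x_{i,t})\|\leq \lambda_t/2$, which in the $L$-smooth setting had to be propagated carefully in Theorem \ref{thm2}, now becomes automatic: since $\lambda_t=\max\{t^\mu,2G\}\geq 2G$, the bound $\|\nabla f_i(x_{i,t})\|\leq G\leq \lambda_t/2$ holds unconditionally for every $t\in[T]$ and every $i\in[m]$. Consequently, the three moment estimates from Lemma \ref{bound_gradient_lemma} on $\ttt_{i,t}^b$ and $\ttt_{i,t}^u$ are valid globally, and the entire inductive scaffolding of Theorem \ref{thm1} can be dispensed with.

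Second, I would re-derive the analogue of Proposition \ref{inner_product_est1} and Proposition \ref{pro3} unchanged (both proofs rely only on the mirror-descent update, Bregman three-point identity, separate convexity, and Lemma \ref{random_ine_lemma}, none of which touch the regularity of $f_i$). The only place where Assumption \ref{Lsmooth_condition} enters is in \eqref{Lsmooth_step}, where $\|\nabla f_i(x_{\ell,t})\|$ is split via $L$-smoothness. Under Assumption \ref{gradient_bdd_ass} I would instead write
\begin{equation*}
\|\nabla f_i(x_{\ell,t})\|\|x_{i,t}-x_{\ell,t}\|\leq G\|x_{i,t}-x_{\ell,t}\|,
\end{equation*}
which eliminates the $L\|x_{\ell,t}-x_{i,t}\|^2$ term entirely — this is precisely why $C_0=0$. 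The remaining term $\sum_t \frac{\aaa_t}{V_t}\sum_i \|\nabla f_i(x_{i,t})\|\|x_{i,t}-x_{\ell,t}\|$ is then handled exactly as in \eqref{C_1bdd}, using Proposition \ref{basic_consensus} and the consensus bound $\|x_{i,t}-x_{\ell,t}\|\leq 2\sum_{s=\tau(t)}^{t-1}\aaa_s\lambda_s$, producing the same $C_1$ series.

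Putting this together, the inequality from Proposition \ref{pro3} yields, on the event $E(\delta)$, exactly the same bound as in the proof of Theorem \ref{thm1} but with the $mL\huaP^2 C_0$ contribution removed from the constant $A$. The finiteness of $C_1,\dots,C_5$ is still guaranteed by $\kkk\geq\max\{\mu+\tfrac{1}{2},1-\mu(p-1)\}$ and $\gamma>1$, since the required summability calculations in the proof of Theorem \ref{thm1} only used the tail decay of $\aaa_t$ and $\lambda_t$, not the regularity assumption on $f_i$. Since the induction on $\|\nabla f_i(x_{i,t})\|\leq \lambda_t/2$ is vacuous here, one obtains directly, for every $\ell\in[m]$, with probability at least $1-\delta$ and every $T\in\mbb N$,
\begin{equation*}
\sum_{t=1}^T\aaa_t\big[f(x_{\ell,t})-f(x^*)\big]+\sum_{i=1}^m D_\ff(x^*\|x_{i,T+1})\leq m^2(R_1+10A)^2.
\end{equation*}
Dropping the nonnegative Bregman term, using monotonicity of $\aaa_t$ to pull $1/\aaa_T$ out of the sum, dividing by $T$, and invoking convexity of $f$ on the ergodic average $\wh x_\ell^T=\frac{1}{T}\sum_{t=1}^T x_{\ell,t}$ yields the claimed bound $f(\wh x_\ell^T)-f(x^*)\leq \frac{1}{T\aaa_T}[m^2(R_1+10A)^2]$.

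There is no real obstacle here — the argument is essentially a verification that the proof of Theorems \ref{thm1}–\ref{thm2} survives the substitution of Assumption \ref{Lsmooth_condition} by Assumption \ref{gradient_bdd_ass}. The only point needing mild care is ensuring that the bound $\|\nabla f_i(x_{i,t})\|\leq \lambda_t/2$ assumed in the conditional moment estimates of Lemma \ref{bound_gradient_lemma} holds for all $t$ (now via the choice $\lambda_t\geq 2G$), and that the $L$-smoothness step in deriving \eqref{Lsmooth_step} is correctly replaced by the direct bound $\|\nabla f_i(x_{\ell,t})\|\leq G$, so that no $C_0$ term appears in the final constant $A$.
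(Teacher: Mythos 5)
Your proposal is correct and follows essentially the same route as the paper: the paper likewise notes that the only change is replacing the $L$-smoothness step at \eqref{Lsmooth_step} by the direct bound $\|\nabla f_i(x_{\ell,t})\|\leq G$ (hence $C_0=0$), that the choice $\lambda_t\geq 2G$ makes the condition $\|\nabla f_i(x_{i,t})\|\leq\lambda_t/2$ hold automatically, and that everything else carries over from Proposition \ref{pro3} and Theorem \ref{thm1}. One small caution: only the gradient-bound induction of Theorem \ref{thm2} becomes vacuous --- the induction on $S$ inside the proof of Theorem \ref{thm1}, which controls $V_S$ via \eqref{induction_ass}, is still needed to convert the $1/V_t$-weighted inequality into the stated bound, so it cannot be ``dispensed with entirely.''
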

\begin{proof}
The core proof follows the same procedures from Proposition \ref{pro3} to Theorem \ref{thm1}. We only mention the difference here. The only difference is the step from \eqref{inner_product_est} to \eqref{Lsmooth_step}. Based on Assumption \ref{gradient_bdd_ass},
for any $\ell\in[m]$, \eqref{inner_product_est} to \eqref{Lsmooth_step} will be estimated as follow
\bea
\nono&&\hspace{-0.7cm}\left\nn\nabla f_i(x_{i,t}),x_{i,t}-x^*\right\mm\geq  f_i(x_{i,t})-f_i(x^*)\\
\nono&&\hspace{-0.7cm}=\Big[ f_i(x_{i,t})-f_i(x_{\ell,t})\Big]+\Big[ f_i(x_{\ell,t})-f_i(x^*)\Big]\\
&&\hspace{-0.7cm}\geq-G\|x_{i,t}-x_{\ell,t}\|+\Big[ f_i(x_{\ell,t})-f_i(x^*)\Big]. \label{inner_product_estthm4}
\eea
Accordingly, for any $\ell\in[m]$, due to the selection of $\lambda_t=\max\left\{t^\mu, 2G\right\}$ and $\|\nabla f_i\|\leq G$, it holds that $\|\nabla f_i(x_{\ell,t})\|\leq\f{\lambda_t}{2}$, and we know $\sum_{t=1}^S\f{\aaa_t}{V_t}\sum_{i=1}^m\|\nabla f_i(x_{\ell,t})\|\|x_{i,t}-x_{\ell,t}\|$ can be bounded in a similar way with \eqref{C_1bdd} as 
\bes
\nono&&\sum_{t=1}^S\f{\aaa_t}{V_t}\sum_{i=1}^m\|\nabla f_i(x_{\ell,t})\|\|x_{i,t}-x_{\ell,t}\|\\
\nono&&\leq 2m\sum_{t=1}^S\f{\aaa_t\lambda_t}{V_t}\sum_{s=\tau(t)}^{t-1}\aaa_s\lambda_s\leq \f{2m\huaP}{V_1}\sum_{t=1}^S\aaa_t\aaa_{\tau(t)}\lambda_t\lambda_{\tau(t)}\\
&&\leq\f{2m\huaP}{2m R_1+A}C_1.
\ees
Also, under Assumption \ref{gradient_bdd_ass}, $\|\nabla f_i\|\leq G$, based on the selection of $\lambda_t=\max\left\{t^\mu, 2G\right\}$, it is easy to see, for any $i\in[m]$, $\|\nabla f_i(x_{i,t})\|\leq \f{\lambda_t}{2}$. Hence we can follow the same procedures of Theorem \ref{thm1} to obtain the desired result.
\end{proof}

Equipped with the above analysis, we can directly state the following result on the convergence rate of Clipped FedSMD under bounded gradient condition.
\begin{cor}\label{thm5}
Under Assumptions \ref{stochastic_gradient}, \ref{random_ine_lemma} and \ref{gradient_bdd_ass},  for any fixed $0<\delta<1$, if the stepsize $\aaa_t$ and the clipping parameter $\lambda_t$ satisfy the condition of Theorem \ref{thm4}, and $(\kkk,\mu)=(\f{p+1}{2p},\f{1}{2p})$, 
%and the total iteration $T\geq [mL(2R_1+10A)+B]^{2p}$,
then for any $\ell\in[m]$, we have, with probability at least $1-\delta$, 
\bes
f(\wh x_\ell^T)-f(x^*)\leq\huaO\left(T^{\f{1-p}{2p}}\log^{\gamma}T\log^2\left(\f{1}{\delta}\right)\right),
\ees
for sufficiently large $T$.
%provided the total iteration $T\geq [mL(2R_1+10A)+B]^{2p}$.
\end{cor}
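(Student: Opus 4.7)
The plan is to directly combine Theorem \ref{thm4} with the specified minmax parameter choice. Theorem \ref{thm4} already yields, with probability at least $1-\delta$ and for every agent $\ell\in[m]$,
\[
f(\wh x_\ell^T) - f(x^*) \leq \f{1}{T\aaa_T}\big[m^2(R_1 + 10A)^2\big],
\]
so the whole task reduces to (i) evaluating $\f{1}{T\aaa_T}$ at $(\kkk,\mu) = (\f{p+1}{2p}, \f{1}{2p})$ for large $T$ and (ii) tracking how the constant $A$ depends on $\delta$.

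For step (i), I would first observe that $(\kkk,\mu) = (\f{p+1}{2p}, \f{1}{2p})$ meets the admissibility constraints of Theorem \ref{thm4} with equality: $\kkk - \mu = \f{1}{2}$ and $1 - \mu(p-1) = \f{p+1}{2p} = \kkk$, which is exactly why this pair is the extremal one. For $T$ large enough that $T^{1/(2p)} \geq 2G$, the minimum inside the definition of $\aaa_T$ is attained by $T^{-\mu}$, so
\[
\aaa_T = \f{1}{(1+\log T)^{\gamma}\, T^{\kkk-\mu}\, T^{\mu}} = \f{1}{(1+\log T)^{\gamma}\, T^{(p+1)/(2p)}},
\]
and consequently $\f{1}{T\aaa_T} = (1+\log T)^{\gamma}\, T^{(p-1)/(2p)}$.

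For step (ii), inspection of the definition of $A$ in Theorem \ref{thm1} shows that only the leading $\log(1/\delta)$ summand depends on $\delta$; the constants $C_0,\ldots,C_5$ are finite precisely because $(\kkk,\mu)$ is admissible, and the remaining summands depend only on the problem parameters $m, L, \huaP, \sigma, p, R_1, G$. Hence $A = \huaO(\log(1/\delta))$ and $(R_1 + 10A)^2 = \huaO(\log^2(1/\delta))$. Multiplying the two estimates delivers the claimed bound $\huaO\!\left(T^{(1-p)/(2p)} \log^{\gamma} T \log^2(1/\delta)\right)$. No genuine obstacle arises in this corollary; the only subtlety is the ``sufficiently large $T$'' caveat, which is what allows the $\min$ in $\aaa_T$ to collapse to its polynomial branch so that the rate can be read off cleanly.
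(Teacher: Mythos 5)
Your proposal is correct and follows essentially the same route as the paper, which likewise treats the corollary as a direct substitution of $(\kkk,\mu)=(\f{p+1}{2p},\f{1}{2p})$ into the bound $\f{1}{T\aaa_T}[m^2(R_1+10A)^2]$ from Theorem \ref{thm4}; your explicit tracking of $A=\huaO(\log(1/\delta))$ to produce the $\log^2(1/\delta)$ factor is a welcome elaboration of what the paper leaves implicit. One sign typo: $\f{1}{T\aaa_T}=(1+\log T)^{\gamma}T^{(1-p)/(2p)}$, not $(1+\log T)^{\gamma}T^{(p-1)/(2p)}$, since the exponent is $\f{p+1}{2p}-1=\f{1-p}{2p}<0$ --- your final stated rate is nonetheless the correct one.
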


\begin{rmk}
	It is noteworthy that, in DSO, existing convergence results for algorithms under heavy-tailed noise are scarce compared to those under light-tailed noise. In particular, to our knowledge, this paper provides a comprehensive theoretical foundation and convergence rate analysis for the federated SMD algorithm under heavy-tailed noise environments for the first time in the context of federated optimization/learning. Our high probability convergence rates are derived in terms of tail parameter $p\in(1,2]$ that is related to the heavy-tail stochastic gradient  and an arbitrary $\gamma>1$ depending on the selection of the stepsize $\aaa_t$. The above convergence results are obtained by developing a set of inductive proof techniques suitable for distributed/federated algorithms. By deriving convergence rates under two fundamental assumptions, this paper enriches the study of DSO problems with heavy-tailed stochastic gradient noise.
 \end{rmk}

\begin{rmk}
	%One of the differences between Theorems \ref{thm1}-\ref{thm3} and Theorem \ref{thm4}-Corollary \ref{thm5} is that, 
	Theorems \ref{thm1}-\ref{thm3} are established  under the $L$-smoothness condition of the local objective functions, compared to the $G$-gradient boundedness condition considered in Theorem \ref{thm4}-Corollary \ref{thm5}. Accordingly, this fact results in a different constant selections in the stepsize $\aaa_t$ and clipping parameter $\lambda_t$ between them. We have highlighted the technical differences in detail within the proof of Theorem \ref{thm4}, compared with Theorem \ref{thm1}-\ref{thm3}. We can also see that directly assuming gradient bounds in Theorem \ref{thm4}-Corollary \ref{thm5} relatively simplifies the parameter selection of $\aaa_t$, $\lambda_t$ and the proof process, compared to Theorems \ref{thm1}-\ref{thm3}. In practice, choosing the right theorem to apply can depend on the specific smoothness and boundedness of the local objective functions $f_i, i\in[m]$. 
\end{rmk}

We conclude this section by presenting two typical scenarios, along with the established main algorithm.  When the mirror map $\Phi$ is taken as $\f{1}{2}\|x\|^2$, our local update of (2) in Algorithm 1 reduces to the well-known projected stochastic gradient descent (SGD) structure, namely, 
\bes
	y_{i,t+1}=P_\huaX\left(x_{i,t}-\aaa_t\ww \nabla_{\lambda_t} f_i(x_{i,t})\right).
\ees
 where $P_\huaX$ denotes the projection operator $P_\huaX(\cdot)=\arg\min_{x\in\huaX}\|\cdot-x\|$. Accordingly, the whole algorithm becomes a Clipped FedSGD.
If we consider the probability simplex decision space $\huaX=\Delta_n=\{x\in\mbb R^n:\sum_{i=1}^n[x]_i=1, [x]_i\geq0, i\in [n]\}$, we utilize the Kullback-Leibler divergence $D_\ff(x\|y)=\sum_{i=1}^n[x]_i\ln\f{[x]_i}{[y]_i}$ induced by the Gibbs entropy function $\Phi(x)=\sum_{i=1}^n[x]_i\ln[x]_i$. Then the corresponding main algorithm reduces to a Clipped Federated stochastic entropic descent (Clipped FedSED) with the local update in (2) of Algorithm 1 becoming
\bes
[y_{i,t+1}]_j=\f{[x_{i,t}]_j\exp(-\aaa_t[\ww \nabla_{\lambda_t} f_i(x_{i,t})]_j)}{\sum_{s=1}^n[x_{i,t}]_s\exp(-\aaa_t[\ww \nabla_{\lambda_t} f_i(x_{i,t})]_s)}, \ j\in[n].
\ees
Both types of algorithms are of  importance. With the convergence theory developed in this paper, the convergence performance of these two algorithms can naturally be guaranteed.

\section{Simulation experiments}
This section verifies the convergence of the algorithm Clipped FedSMD through the following linear regression problem:
\begin{align*}
\min_{x\in\mathcal{X}}\sum_{i=1}^m f_i(x):=\sum_{i=1}^m\frac{1}{2}(\langle a_i,x\rangle-b_i)^2,
\end{align*}
where $\{a_i\in\mathbb{R}^n\}_{i=1}^m$ is the sequence of feature vectors, and $\{b_i\in\mathbb{R}\}_{i=1}^m$ is the sequence of target values.

In the following simulations, each element of $a_i$ is uniformly distributed over the interval $[-1,1]$, and $b_i=\langle a_i,c\rangle+\epsilon_i$, where
\begin{align*}
[c]_i=\begin{cases}
1,&~1\leq i\leq\lfloor\frac{n}{2}\rfloor,\\
0,&~\lfloor\frac{n}{2}\rfloor<i\leq n,
\end{cases}
\end{align*}
and $\epsilon_i$ is independently and identically distributed according to the normal distribution $\mathcal{N}(0,1)$. The gradient noise $\xi_{i,t}:=\nabla f_i(x_{i,t})-\widehat{\nabla} f_i(x_{i,t})$ is generated via $\xi_{i,t}=\eta_{i,t}-\mbb E[\eta_{i,t}]$, where the probability density function of any components of $\eta_{i,t}$, $i\in[m]$ is given via the Pareto probability density function
\begin{align*}
h(x)=
\begin{cases}
\frac{\beta x_s^\beta}{x^{\beta+1}},&~x\geq x_s,\\
0,&~x<x_s,
\end{cases}
\end{align*}
with $\beta=2$, $x_s=0.5$.
% $x_a=\frac{\beta x_s}{\beta-1}$, and $\rho>0$ is a scaling factor that ensures $\|\xi_{i,t}\|\leq\sigma$.

 The decision set $\mathcal{X}$ is configured as the probability simplex $\{x\in\mathbb{R}^2:\sum_{i=1}^2[x]_i=1,[x]_i\geq0,i\in[2]\}$. Additionally, the distance generation function is set to the entropic function $\Phi(x)=\sum_{i=1}^2[x]_i\ln[x]_i$.
Furthermore, with a communication period of $\mathcal{P}=2$ and communication rounds of $\mathcal{T}=30000$, the total number of iterations is correspondingly $T=60001$. The selections of the stepsize $\alpha_t$ and the clipping parameter $\lambda_t$ follow from the rules of Theorem 4, with the corresponding parameters $\mu=0.5/p$, $\kappa=\mu+0.5$, $\gamma=1.01$ with the tail parameter $p=1.8$ related to the heavy-tailed noise.

For convenience, the global average optimization error is denoted as $\mathsf{E}$, with its specific expressions given as
\begin{align*}
&\mathsf{E}=\frac{1}{m}\sum_{\ell=1}^mf(\widehat{x}_{\ell}^T)-f(x^*).
\end{align*}
Based on the above setting, we conducted a series of simulation experiments to evaluate the performance of the Clipped FedSMD.
In each experiment, only the specific parameters under comparison were varied, while all other parameters were held constant.

As shown in Fig. \ref{comp_m}, the global average optimization error increases with the number of clients $m$, indicating a slower convergence performance with increasing number of agents.  Then, the impact of different communication periods on the convergence rate was compared. According to Fig. \ref{comp_ps}, as the communication period $\mathcal{P}$ increases, the average global optimization error of the algorithm also increases. This is as expected, because for the same number of iterations, a larger communication period means each client receives global information from the server less frequently, resulting in a slower convergence performance. Finally, the effect of the tail parameter $p$ (that is used to measure the heavy-tailed randomness of the stochastic gradient noises) on the algorithm's performance was compared. As shown in Fig. \ref{comp_p}, a larger tail parameter $p$ results in a smaller global average optimization error, which indicates that the algorithm converges faster. This provides an intuitive validation that the stronger the heavy-tailed effect, the greater the impact on the corresponding convergence performance.

\begin{figure}[htbp]
	\centerline{\includegraphics[width=0.8\columnwidth]{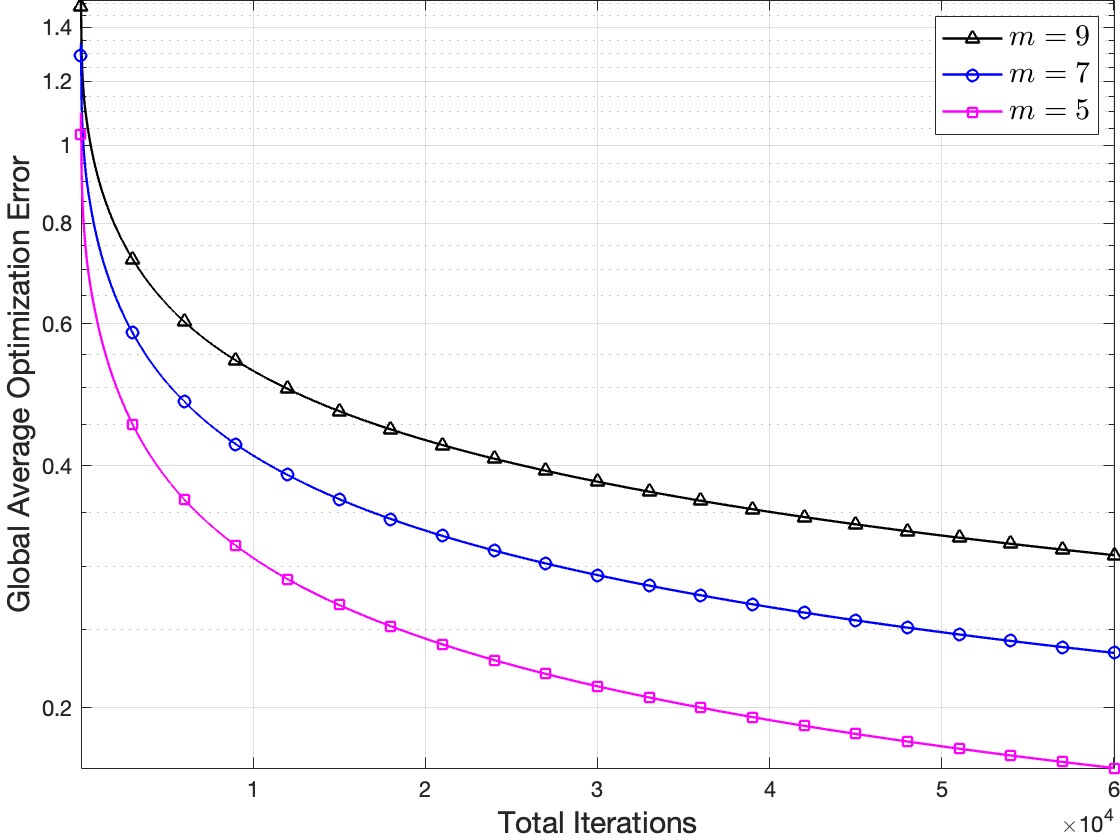}}
	\caption{Global average optimization error under different numbers $m$ of clients.}
	\label{comp_m}
\end{figure}
\begin{figure}[htbp]
	\centerline{\includegraphics[width=0.8\columnwidth]{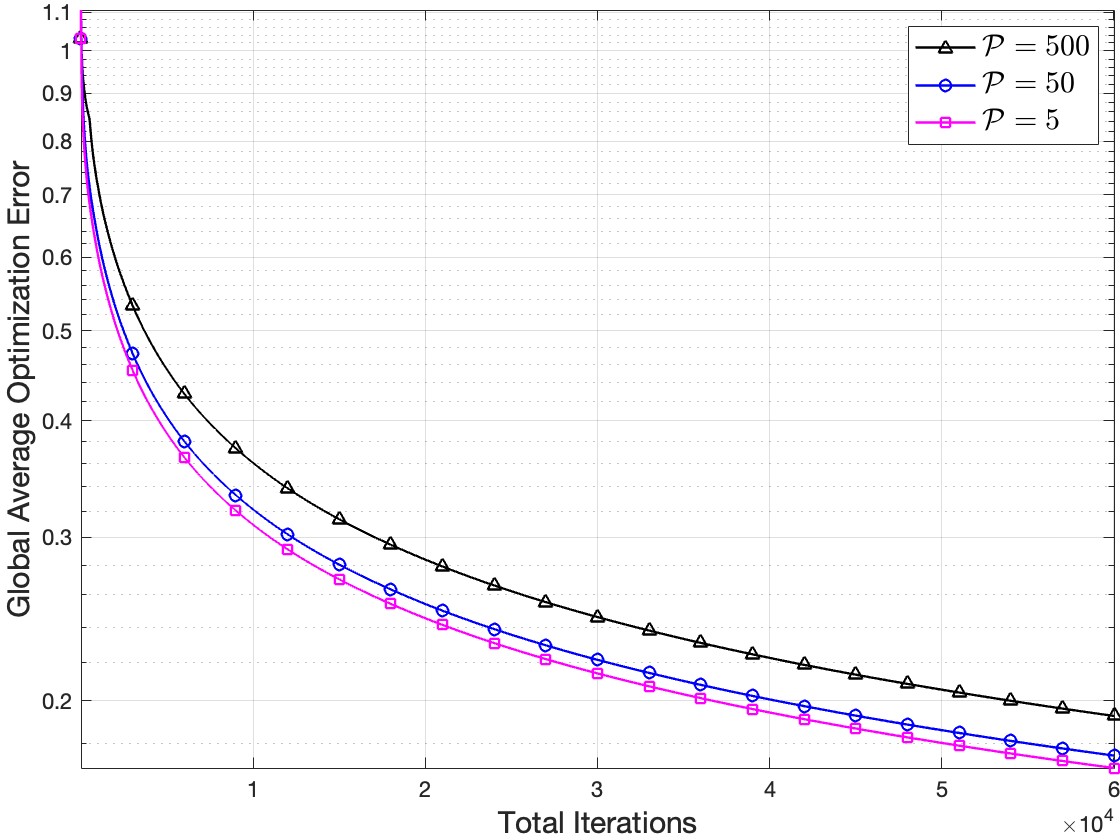}}
	\caption{Global average optimization error under different communication periods $\huaP$.}
	\label{comp_ps}
\end{figure}
\begin{figure}[htbp]
	\centerline{\includegraphics[width=0.8\columnwidth]{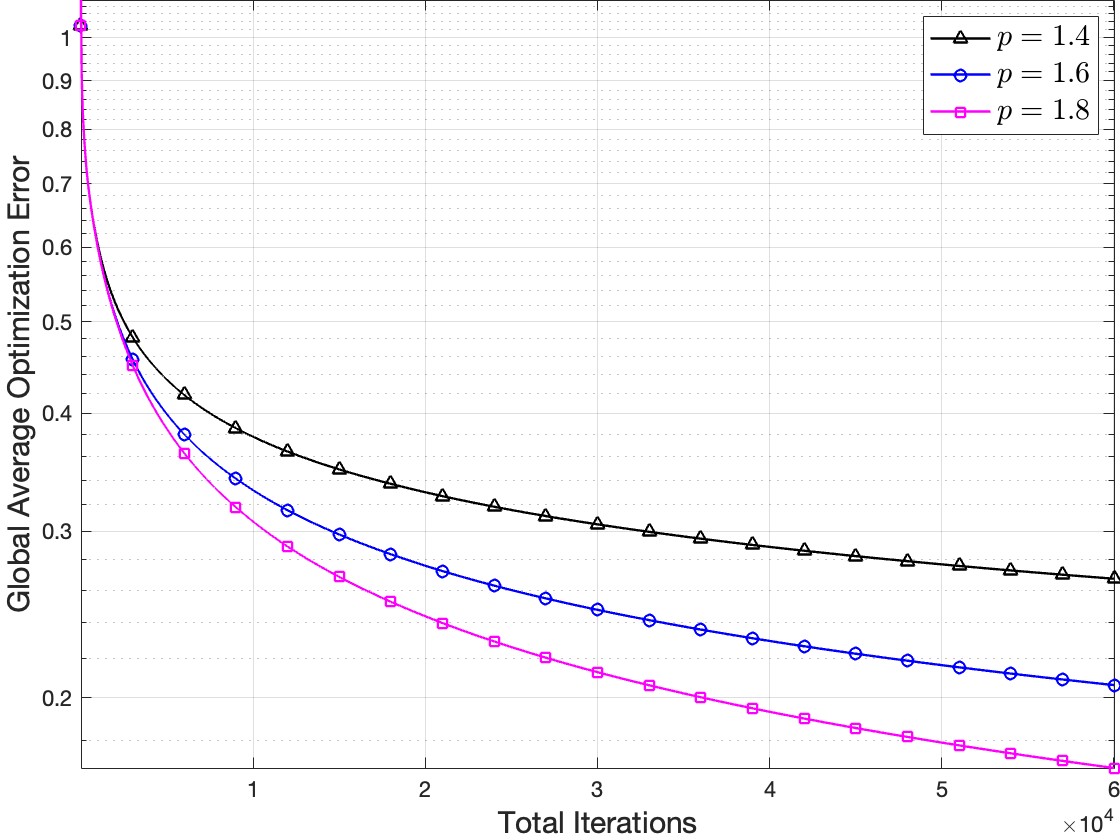}}
	\caption{Global average optimization error under different tail parameters $p$ of the noises.}
	\label{comp_p}
\end{figure}

\section{Conclusion}
	In this work, we have provided a clipped federated mirror descent algorithm for solving the DSO problem under heavy-tailed stochastic gradient noise. A theory of the algorithm is rigorously established. A high probability convergence rate of 
	$\huaO\left(T^{\f{1-p}{2p}}\log^\gamma T\right)$ ($1<p\leq2$, $\gamma>1$) is achieved. The study is also an extension of the existing work on federated optimization/learning under bounded-variance light-tailed noise. Several typical byproducts of the main algorithm such as Clipped FedSGD and Clipped FedSED corresponding to different selections of mirror map are clearly described. Throughout the analysis approaches, we do not require the compactness assumption of the decision domain adopted by many existing work on DMD. Numerical experiments are conducted to verify the convergence performance of the main algorithm. Due to space constraints, this paper only considers a federated optimization algorithm with one type of communication patterns. In the future, it would be interesting and possible to establish high probability convergence results for other types of federated optimization/learning algorithms under heavy-tailed-noise conditions of the stochastic gradient.

\end{CJK*}
\end{document}